\newtheorem{defi}{Definition}[section]
\newtheorem{lm}{Lemma}[section]
\newtheorem{thm}{Theorem}[section]
\newtheorem{prop}{Proposition}[section]
\newtheorem{rem}{Remark}[section]
\newtheorem{ex}{Example}[section]
\newcommand{\tensor}{\otimes}
\newcommand{\maps}{\colon}
\newcommand{\Cat}{{\rm Cat}}
\newcommand{\Set}{{\rm Set}}
\newcommand{\A}{{\mathcal A}}
\newcommand{\B}{{\mathcal B}}
\newcommand{\N}{{\mathcal N}}
\newcommand{\Pe}{{\mathcal P}}
\newcommand{\R}{{\mathcal R}}
\newcommand{\Sss}{{\mathcal Ss}}
\newcommand{\G}{{\mathcal G}}
\newcommand{\E}{{\mathcal E}}
\newcommand{\Ha}{{\mathcal H}}
\newcommand{\C}{{\mathcal C}}
\newcommand{\D}{{\mathcal D}}
\newcommand{\Q}{{\mathcal Q}}
\newcommand{\Ss}{{\mathcal S}}
\newcommand{\Fol}{{\mathcal Fol}}
\newcommand{\tri}{\triangleright}
\newcommand{\tril}{\triangleleft}
\numberwithin{equation}{section}
\begin{document}

\title{The simplicial interpretation of bigroupoid 2-torsors}
\author{Igor Bakovi\' c \\ Rudjer Bo\v skovi\' c Institute\\Theoretical Physics Department}
\date{}

\maketitle
\begin{abstract}
Actions of bicategories arise as categorification of actions of
categories. They appear in a variety of different contexts in
mathematics, from Moerdijk's classification of regular Lie groupoids
in foliation theory \cite{Mo} to Waldmann's work on deformation
quantization \cite{W}. For any such action we introduce an {\it
action bicategory}, together with a canonical projection (strict)
2-functor to the bicategory which acts. When the bicategory is a
bigroupoid, we can impose the additional condition that action is
principal in bicategorical sense, giving rise to a {\it bigroupoid
2-torsor}. In that case, the Duskin nerve of the canonical
projection is precisely the Duskin-Glenn simplicial 2-torsor,
introduced in \cite{Gl}.
\end{abstract}
\thispagestyle{empty}

\footnotetext[1]{This research was in part supported by the Croatian
Ministry of Science, Education and Sport, Project No.
098-0982930-2990.}

\footnotetext[2]{The author acknowledges support from European
Commission under the FP6 MRTN-CT-2006-035505 HEPTOOLS Marie Curie
Research Training Network}

\footnotetext[2]{The author acknowledge support from the project
"Applications of nonabelian cohomology to Geometry, Algebra and
Physics", Fonds DAAD06-346}

\footnotetext[3]{Author's email address: ibakovic@gmail.com}
\newpage
\section{Introduction}
There are several different ways to characterize those simplicial
sets which arise as nerves of categories, and the most of this
(equivalent) ways rely on the Quillen closed model structure on the
category $\Ss Set$ of simplicial sets. Simplicial sets which are
fibrant objects for the closed model structure on $\Ss Set$ are
called Kan complexes, and they are characterized by certain {\it
horn filling} conditions describing their exactness properties. This
conditions for a simplicial set $X_{\bullet}$ explicitly use a {\it
simplicial kernel} $K_{n}(X_{\bullet})$ in dimension $n$
\[K_{n}(X_{\bullet})=\{(x_{0},x_{1},\ldots,x_{i},\ldots,x_{j},\ldots,x_{n-1},x_{n})|
d_{i}(x_{j})=d_{j-1}(x_{i}), i<j \} \subseteq X_{n-1}^{n+1}\] which
is interpreted as the set of all possible sequences of
(n-1)-simplices which could possibly be the boundary of any
n-simplex. There exists a natural {\it boundary map}
\begin{equation} \label{bound}
\xymatrix{\partial_{n} \maps X_{n} \to K_{n}(X_{\bullet})}
\end{equation}
which takes any n-simplex $x \in X_{n}$ to the sequence
$\partial_{n}(x)=(d_{0}(x),d_{1}(x),\ldots,d_{n-1}(x),d_{n}(x))$ of
its (n-1)-faces. The set $\bigwedge^{k}_{n}(X_{\bullet})$ of k-horns
in dimension $n$
\[ \bigwedge^{k}_{n}(X_{\bullet})=\{(x_{0},x_{1},\ldots,x_{k-1},x_{k+1},\ldots,x_{n-1},x_{n})|
d_{i}(x_{j})=d_{j-1}(x_{i}), i<j, i,j\neq k\} \subseteq
X_{n-1}^{n}\] is the set of all possible sequences of
(n-1)-simplices which could possibly be the boundary of any
n-simplex, except that we $k^{th}$ face is missing. The k-horn map
in dimension $n$
\begin{equation} \label{horn}
\xymatrix{p_{n}^{k}(x) \maps X_{n} \to
\bigwedge^{k}_{n}(X_{\bullet})}
\end{equation} is
defined by the composition of the boundary map (\ref{bound}), with
the natural projection $q_{n}^{k}(x) \maps K_{n}(X_{\bullet}) \to
\bigwedge^{k}_{n}(X_{\bullet})$, which just omits the $k^{th}$
(n-1)-simplex from the sequence. Then we say that for $X_{\bullet}$
the {\it $k^{th}$ Kan condition in dimension $n$} is satisfied
(exactly) if the k-horn map (\ref{horn}) is surjection (bijection).
If Kan conditions are satisfied for all $0 < k < n$ and for all $n$,
then we say that $X_{\bullet}$ is a {\it weak Kan complex}, and if
Kan conditions are satisfied for extremal horns as well $0 \leq k
\leq n$ and for all $n$, then we say that $X_{\bullet}$ is a {\it
Kan complex}.

One of the above mentioned characterizations of nerves of
categories, first observed by Street, is that the simplicial set
$X_{\bullet}$ is the nerve of a category if and only if it is a weak
Kan complex in which the weak Kan conditions are satisfied exactly.
Weak Kan complexes were introduced by Boardman and Vogt \cite{BV} in
their work on homotopy invariant algebraic structures. These objects
are fundamental in the recent work of Joyal \cite{Jo}, which is so
far the most advanced form of the interplay between the category
theory and the simplicial theory. He even used the name {\it
quasicategory}, instead of the weak Kan complex, in order to
emphasize that {\it "most concepts and results of category theory
can be extended to quasicategories"}.

Similar characterization of nerves of groupoids leads to the
fundamental simplicial objects introduced by Duskin in \cite{Du2}.
An {\it n-dimensional Kan hypergroupoid}, is a Kan complex
$X_{\bullet}$ in which Kan conditions (\ref{horn}) are satisfied
exactly for all $m > n$ and $0 \leq k \leq m$. Glenn used the name
{\it n-dimensional hypergroupoid} in \cite{Gl} for any simplicial
set in which Kan conditions are satisfied exactly above dimension
$n$, while Beke called them in \cite{Be2} {\it exact n-types}, in
order to emphasize their homotopical meaning. These simplicial sets
morally play the role of nerves of weak n-groupoids, which is known
to be valid for small $n$. Consequently, a simplicial set
$X_{\bullet}$ is the nerve of a groupoid if and only if it is a
1-dimensional Kan hypergroupoid, and similar characterization holds
for nerves of bigroupoids.

Bigroupoids and bicategories, introduced by B\'enabou \cite{Be} in
1967, are weakest possible generalization of ordinary groupoids and
categories, respectively, to the immediate next level. In a
bicategory (bigroupoid), Hom-sets become categories (groupoids) and
the composition becomes functorial instead of functional. This
changes properties of associativity and identities which only hold
up to {\it coherent natural isomorphisms}. The {\it coherence laws}
which this natural isomorphisms satisfy, are the deep consequence of
the process called {\it categorification}, invented by Crane
\cite{Cr}, \cite{CF}, in which we find category theoretic analogs of
set theoretic concepts by replacing sets with categories, equations
between elements of the sets by isomorphisms between objects of the
category, functions by functors and equations between functions by
natural isomorphisms between functors.

The categorification become an essential tool in many areas of
modern mathematics. By generalizing algebraic concepts from the
classical set theory to the context of higher category theory, Baez
developed a program \cite{BD} of {\it higher dimensional algebra} in
an attempt to unify quantum field theory with traditional algebraic
topology. Later, Baez and Schreiber developed a {\it higher gauge
theory} \cite{BS1}, \cite{BS2} which describes the parallel
transport of strings using 2-connections on principal 2-bundles, as
the categorification of the usual gauge theory which describes the
parallel transport of point particles using connections on principal
bundles. Vector 2-spaces arose as a categorification of vector
spaces in the work of Kapranov and Voevodsky \cite{KV}, and they
were used by Baas, Dundas and Rognes \cite{BDR}, who defined vector
2-bundles in a search for a geometrically defined elliptic
cohomology. Later, Baas, B\"okstedt and Kro used topological
bicategories and vector 2-bundles \cite{BBK} in order to develop
2-categorical K-theory as the categorification of the usual
K-theory.

Another essential tool which we used is an {\it internalization}.
This is a process of generalizing concepts from the category $Set$
of sets, which are described in terms of sets, functions and
commutative diagrams, to concepts in another category $\E$ by
describing them in terms of objects, morphisms, and commutative
diagrams in $\E$. The internalization of the particular algebraic or
geometric structure in the category $\E$ rely on exactness
properties of $\E$ needed to describe corresponding commutative
diagrams. Therefore, the choice of the category $\E$ will depend on
the algebraic or geometric structure one wants to describe.

The most natural choice for an internalization and a
categorification of algebraic and geometric structures is a {\it
topos}, which is according to Grothendieck, the ultimate
generalization of the concept of space.

Let us now describe the content and the main results of the paper.

In Chapter 2 we recall some basic simplicial methods which we will
extensively use in the thesis. Most of this material is standard and
can be found in a classical book \cite{May} by May, or in a modern
treatment in \cite{GJ}. However, we also recall some more exotic
endofunctors on a category $\Ss Set$ of simplicial sets, such as the
{\it n-Coskeleton} $Cosk^{n}$ and the {\it shift functor} or {\it
d\'ecalage} $Dec$ which can be find in \cite{Du1}. Actions and
n-torsors over n-dimensional Kan hypergroupoids are defined by Glenn
in \cite{Gl} using simplicial maps which we call {\it exact
fibrations}. A simplicial map $\lambda_{\bullet} \maps \E_{\bullet}
\to \B_{\bullet}$ is an exact fibration in dimension $n$, if for all
$0 \leq k \leq n$, the diagrams
\[\xymatrix@!=4pc{E_{n} \ar[d]_{p_{\bar{k}}} \ar[r]^-{\lambda_{n}} &
B_{n} \ar[d]^{p_{\bar{k}}} \\
\bigwedge^{k}_{n}(\E_{\bullet}) \ar[r]^{} &
\bigwedge^{k}_{n}(\B_{\bullet})}\] are pullbacks. It is called an
exact fibration if it is an exact fibration in all dimensions. At
the end of this chapter, we describe two crucial concepts from
\cite{Gl} which we will use later in the thesis. An action of the
n-dimensional hypergroupoid $\B_{\bullet}$ is given in Definition
2.13 as a simplicial map $\lambda_{\bullet} \maps \Pe_{\bullet} \to
\B_{\bullet}$ which is an exact fibration for all $m \geq n$, and an
n-dimensional hypergroupoid n-torsor over $X$ in $\E$ is given in
Definition 2.14 as a simplicial map $\lambda_{\bullet} \maps
\Pe_{\bullet} \to \B_{\bullet}$ such that $P_{\bullet}$ is augmented
over $X$, aspherical and $n-1$-coskeletal.

In Chapter 3, definitions of a bicategory, their homomorphisms,
pseudonatural transformations and modifications are given as they
were defined by B\'enabou in his classical paper \cite{Be}. Then
Chapter 4 describes the Duskin nerve for bicategories as a geometric
nerve defined by the singular functor of the fully faithful
embedding
\begin{equation} \label{skelbicat}
\xymatrix{i \maps \Delta \to Bicat}
\end{equation}
of the skeletal simplicial category $\Delta$ into the category
$Bicat$ of bicategories and strictly unital homomorphism of
bicategories, constructed by B\'enabou in \cite{Be}. This embedding
regards any ordinal $[n]$ as the locally discrete 2-category, in the
sense that Hom-categories are discrete, so there exist only trivial
2-cells. We show that the Duskin nerve functor
\begin{equation} \label{nerbicat}
\xymatrix{N_{2} \maps Bicat \to \Ss Set}
\end{equation}
is fully faithful in Theorem 4.1 based on the result that the
geometric nerve provides a fully faithful functor on the category
$2-Cat_{lax}$ of 2-categories and normal lax 2-functors given in
\cite{BBF1}. The sets of n-simplices of the nerve $N_{2}\B$ of a
bicategory $\B$ are defined by $Hom_{Bicat}(i[n],\B)$, which were
explicitly described by Duskin \cite{Du5} in a geometric form.

In Chapter 5, we introduce {\it the second new concept} of this
paper, {\it action of a bicategory}, in Definition 5.1 as a
categorification of an action of a category. For an internal
bicategory $\B$ given by a bigraph in a finitely complete category
$\E$, and an internal category $\Pe$
\begin{equation}
\begin{array}{c}\label{actionbicat}
\xymatrix@!=2pc{P_{1} \ar@<1ex>[d]^-{s} \ar@<-1ex>[d]_{t} & B_{2}
\ar@<1ex>[d]^{s_{1}} \ar@<-1ex>[d]_{t_{1}}\\ P_{1}
\ar[dr]_{\Lambda_{0}} & B_{1}
\ar@<1ex>[d]^{s_{0}} \ar@<-1ex>[d]_{t_{0}}\\
& B_{0}}
\end{array}
\end{equation}
together with the {\it momentum functor} $\Lambda \maps \Pe \to
\B_{0}$ to a discrete category $\B_{0}$ of objects of the bicategory
$\B$, an {\it action functor}
\begin{equation} \label{Action}
\xymatrix{A \maps \Pe \times_{\B_{0}} \B_{1} \to \Pe}
\end{equation} is a categorification of an action of the category.
We introduce coherence laws for this action, which express the fact
that categories with an action of the bicategory $\B$ are {\it
pseudoalgebras} over a {\it pseudomonad} \cite{Her2}, \cite{L1},
\cite{LMV} naturally defined by $\B$. We give a description of an
Eilenberg-Moore 2-category of actions of the bicategory $\B$,
without details of the construction for corresponding pseudoalgebras
over a pseudomonad. In Chapter 6, for each action
(\ref{actionbicat}) of a bicategory $\B$ on a category $\Pe$, we
define {\it the third new concept}, an {\it action bicategory} $\Pe
\tril \B$ whose construction is given in Theorem 6.1. Then we see in
Proposition 6.1 that an action bicategory $\Pe \tril \B$ comes with
a canonical projection
\begin{equation} \label{homobicat}
\xymatrix{\Lambda \maps \Pe \tril \B \to \B}
\end{equation}
to the bicategory $\B$, which is a strict homomorphism of
bicategories.

Finally, in Chapter 7 we define {\it the fourth new concept}, and
our main geometric object - a {\it bigroupoid 2-torsor}. In
Definition 7.2 we define a bigroupoid 2-torsor as a bundle of
groupoids $\pi \maps \Pe \to X$ over an object $X$ in the category
$\E$, for which the induced functor
\begin{equation} \label{2torscon}
\xymatrix{(Pr_{1},A) \maps \Pe \times_{\B_{0}} B_{1} \to \Pe
\times_{X} \Pe}
\end{equation}
for an action (\ref{actionbicat}) is a strong equivalence of
groupoids. The first main result of the paper is Theorem 7.1 in
Chapter 7 which proves that for an action (\ref{actionbicat}) of an
internal bigroupoid $\B$ on groupoid $\Pe$, the simplicial map
$\Lambda_{\bullet}=N_{2}(\Lambda) \maps \Q_{\bullet} \to
\B_{\bullet}$ which arise as an application of a Duskin nerve for
bicategories (\ref{Action}) on a canonical homomorphism of
bicategories (\ref{homobicat}) is a (simplicial) action of the
bigroupoid $\B$ on the groupoid $\Pe$, i.e. it is an exact fibration
for all $n \geq 2$. The second main result of the paper is Theorem
7.2 which proves that for any $\B$-2-torsor $\Pe$ over $X$, the
simplicial map $\Lambda_{\bullet}=N_{2}(\Lambda) \maps \Q_{\bullet}
\to \B_{\bullet}$ is a Glenn's 2-torsor, which is an internal
simplicial map $\Lambda_{\bullet} \maps P_{\bullet} \to
\B_{\bullet}$ in $\Ss(\E)$, which is an exact fibration for all $n
\geq 2$, and where $P_{\bullet}$ is augmented over $X$, aspherical
and 1-coskeletal ($P_{\bullet} \simeq Cosk^{1}(P_{\bullet}))$.

\section{Simplicial objects}
In this section we will review some standard notions from the theory
of simplicial sets. Most of the statements and proofs may be found
in standard textbooks \cite{GJ} or \cite{May}.

\begin{defi} Skeletal simplicial category $\Delta$ consists of the
following data:
\begin{itemize}
\item objects are finite nonempty ordinals $[n]=\{0<1<...<n\}$,

\item morphisms are monotone maps $f \maps [n] \to [m]$, which for
all $i,j \in [n]$ such that $i \leq j$, satisfy $f(i) \leq f(j)$.
\end{itemize}
We also call $\Delta$ the topologist's simplicial category, and this
is a full subcategory of the algebraist's simplicial category
$\bar{\Delta}$, which has an additional object $[-1]=\emptyset$,
given by a zero ordinal, that is an empty set.
\end{defi}

Skeletal simplicial category $\Delta$ may be also given by means of
generators given by the diagram

\[\xymatrix{[0] \ar@<-0.5ex>[rr]_{\partial_{0}} \ar@<0.5ex>[rr]^{\partial_{1}}  &&
[1] \ar[ll] \ar@<-1ex>[rr]_{\partial_{0}}
\ar@<1ex>[rr]^{\partial_{2}} \ar[rr] && [2] \ar@<-0.5ex>[rr]
\ar@<0.5ex>[rr] \ar@<-1.5ex>[rr]_{\partial_{0}}
\ar@<1.5ex>[rr]^{\partial_{3}} \ar@<-0.5ex>[ll] \ar@<0.5ex>[ll] &&
[3] \ar[ll] \ar@<-1ex>[ll] \ar@<1ex>[ll]...  }\] and relations given
by the maps $\partial_{i} \maps [n-1] \to [n]$ for $0 \leq i \leq
n-1$, called coface maps, which are injective maps that omit $i$ in
the image, and the maps $\sigma_{i} \maps [n] \to [n-1]$ for $0 \leq
i \leq n-1$, called codegeneracy maps, which are surjective maps
which repeat $i$ in the image. These maps satisfy following
cosimplicial identities:
\[\begin{array}{c}
\partial_{j}\partial_{i}=\partial_{i}\partial_{j-1} \hspace{2cm} (i<j)\\
\sigma_{j}\sigma_{i}=\sigma_{i}\sigma_{j+1} \hspace{2cm} (i \leq j)\\
\sigma_{j}\partial_{i}=\partial_{i}\sigma_{j-1} \hspace{2cm} (i<j)\\
\hspace{0.9cm} \sigma_{j}\partial_{i}=id \hspace{2cm} (i=j,i=j+1)\\
\hspace{0.7cm} \sigma_{j}\partial_{i}=\partial_{i}\sigma_{j+1}
\hspace{2cm} (i > j+1)
\end{array}\]

We will use the following factorization of monotone maps by means of
cofaces and codegeneracies.

\begin{lm} Any monotone map $f \maps [m] \to [n]$ has a unique
factorization given by
\[\xymatrix{f=\partial^{n}_{i_1}\partial^{n-1}_{i_2}...\partial^{n-s+1}_{i_s}
\sigma^{m-t}_{j_t}...\sigma^{m-2}_{j_2}\sigma^{m-1}_{j_1}}\] where
$0 \leq i_s < i_{s-1} < ... < i_1 \leq n$, $0 \leq j_t < j_{t-1} <
... < j_1 \leq m$ and $n=m-t+s$.
\begin{proof} The proof follows directly from the injective-surjective factorization in $\Set$
and simplicial identities.
\end{proof}
\end{lm}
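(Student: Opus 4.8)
The plan is to reduce the statement to the elementary surjective--injective factorization in $\Set$ together with normal forms for injective and for surjective monotone maps. First I would invoke the (surjective, injective) factorization of $f \maps [m] \to [n]$ in $\Set$: letting $k$ be one less than the cardinality of the image $f([m]) \subseteq [n]$, the subset $f([m])$ with the order inherited from $[n]$ is canonically isomorphic to the ordinal $[k]$, and $f$ factors uniquely as $f = g \circ h$ with $h \maps [m] \to [k]$ a surjective monotone map and $g \maps [k] \to [n]$ an injective monotone map. Uniqueness here is the point that makes the whole factorization unique, so I would spell out that for monotone maps between ordinals the image, equipped with its induced order, is a canonical choice of epi--mono factorization object; hence $g$, $h$, and the numbers $k$, $s := n-k$, $t := m-k$ are determined by $f$.

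Next I would establish the normal form for an injective monotone $g \maps [k] \to [n]$ by induction on $s = n-k$, the number of elements of $[n]$ not lying in the image of $g$. If $s = 0$ then $g$ is the identity and the word is empty; otherwise, letting $i_1$ be the largest missing element, $g$ factors uniquely as $g = \partial^{n}_{i_1} \circ g'$ with $g' \maps [k] \to [n-1]$ injective monotone having exactly $s-1$ missing elements, so the inductive hypothesis yields $g = \partial^{n}_{i_1}\partial^{n-1}_{i_2}\cdots\partial^{n-s+1}_{i_s}$ with $i_s < i_{s-1} < \cdots < i_1 \le n$. For uniqueness I would observe that in any coface word with strictly decreasing indices the leftmost index is forced to be the largest missing value, so the indices are exactly the complement of $f([m])$ listed in decreasing order. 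Dually, for a surjective monotone $h \maps [m] \to [k]$ I would induct on $t = m-k$, the number of indices $j \in \{0,\dots,m-1\}$ with $h(j) = h(j+1)$; peeling off $\sigma^{m-1}_{j_1}$ on the right for $j_1$ the largest such index gives $h = \sigma^{m-t}_{j_t}\cdots\sigma^{m-2}_{j_2}\sigma^{m-1}_{j_1}$ with $0 \le j_t < \cdots < j_1$, uniquely by the same reasoning.

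Finally I would concatenate the two words to obtain the asserted factorization of $f = g \circ h$, with the inequalities on the $i$'s and $j$'s inherited from the two normal forms and with $n = k+s$, $m = k+t$, so that $n = m-t+s$. Uniqueness of the full factorization then follows formally: any factorization of $f$ of the stated shape regroups as (injective word)$\,\circ\,$(surjective word), whose composite is a surjective--injective factorization of $f$, hence by uniqueness in $\Set$ recovers $g$ and $h$, and then by uniqueness in the two inductive steps recovers the individual $\partial$'s and $\sigma$'s. I do not expect a genuine obstacle; the only thing requiring care is the bookkeeping of the superscripts (the running domain and codomain dimensions) and matching the decreasing-index conventions to the choice of peeling off the largest missing value, resp.\ the largest repeated index, first at each stage. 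The cosimplicial identities listed above are not strictly needed for the existence and uniqueness of this particular factorization of a monotone map, though they are precisely what is required for the stronger statement that an arbitrary word in cofaces and codegeneracies can be rewritten into this normal form.
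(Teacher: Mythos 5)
Your proposal is correct and follows essentially the same route as the paper's (very terse) proof: the surjective--injective factorization in $\Set$ followed by the standard normal forms for injective and surjective monotone maps, with uniqueness traced back to the canonicity of the image factorization. Your closing remark is also fair --- the cosimplicial identities are needed only to rewrite arbitrary words into this normal form, not for the existence and uniqueness of the factorization of a given monotone map itself.
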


\begin{defi} Simplicial object $X_{\bullet}$ in a category $\C$ is a functor $X \maps
\Delta^{op} \to \C$. This is an object of the category $\Ss(\C)$
whose morphisms are natural transformations, which we call internal
simplicial morphisms. In the case when the category $\C=\Set$ is the
category of sets (in a fixed Grothendieck universe), then we call
$X_{\bullet}$ a simplicial set, and we denote the corresponding
category of simplicial sets by $\Ss Set$.
\end{defi}
Thus we can view a simplicial object $X_{\bullet}$ in $\C$ as a
diagram
\[\xymatrix{X_{0} \ar[rr]  && X_{1} \ar@<-0.5ex>[ll]_{d_{1}} \ar@<0.5ex>[ll]^{d_{0}}
\ar@<-0.5ex>[rr] \ar@<0.5ex>[rr] && X_{2} \ar[ll]
\ar@<-1ex>[ll]_{d_{2}} \ar@<1ex>[ll]^{d_{0}} \ar[rr] \ar@<-1ex>[rr]
\ar@<1ex>[rr] &&X_{3}... \ar@<-0.5ex>[ll] \ar@<0.5ex>[ll]
\ar@<-1.5ex>[ll]_{d_{3}} \ar@<1.5ex>[ll]^{d_{0}} }\] in $\C$, where
we denoted just extremal face operators, and left the signature for
inner face operators, and degeneracies.

Then the following simplicial identities hold:
\[\begin{array}{c}
d_{i}d_{j}=d_{j-1}d_{i} \hspace{2cm} (i<j)\\
s_{i}s_{j}=s_{j+1}s_{i} \hspace{2cm} (i \leq j)\\
d_{i}s_{j}=s_{j-1}d_{i} \hspace{2cm} (i<j)\\
\hspace{0.9cm} d_{i}s_{j}=id \hspace{2cm} (i=j,i=j+1)\\
\hspace{0.7cm} d_{i}s_{j}=s_{j+1}d_{i} \hspace{2cm} (i > j+1)
\end{array}\] where $d_{i}:=X(\partial_{i})$ and
$s_{i}:=X(\sigma_{i})$.

\begin{defi} An augmented simplicial object $X_{\bullet} \to X_{-1}$
in a category $\C$ is a functor $X \maps \bar{\Delta}^{op} \to \C$.
This is an object of the category $\Ss_{a}(\C)$ whose morphisms are
natural transformations, which we call simplicial maps of augmented
simplicial objects.
\end{defi}

In order to define basic endofunctors on the category $\Ss(\C)$,
which we will use in the thesis, we first need to describe the
process of a truncation of internal simplicial objects. For any
natural number $n$, we have the full subcategory $\Delta_{n}$ of the
simplicial category $\Delta$, whose objects are the first $n+1$
ordinals. Then we have the following definition.

\begin{defi} Let $X_{\bullet}$ be a simplicial object in $\C$.
An n-truncated simplicial object $tr_{n}(X_{\bullet})$ in a category
$\C$ is a functor $X i_{n} \maps \Delta_{n}^{op} \to \C$ given by
the precomosition with an embedding $i_{n} \maps \Delta_{n} \to
\Delta$. This is an object of the category $\Ss^{n}(\C)$,  and we
have an n-truncation functor
\[ tr^{n} \maps \Ss(\C) \to \Ss^{n}(\C) \] from the category
$\Sss(\C)$ of simplicial objects in $\C$, to the category
$\Sss^{n}(\C)$ of n-truncated simplicial objects in $\C$.
\end{defi}

If $\C$ is a finitely complete category, an n-truncation functor
$tr^{n} \maps \Ss(\C) \to \Ss^{n}(\C)$ has a right adjoint $cosk^{n}
\maps \Ss^{n}(\C) \to \Ss(\C)$, and if $\C$ is a finitely cocomplete
category, it has a left adjoint $sk^{n} \maps \Ss^{n}(\C) \to
\Ss(\C)$.

The corresponding comonad $Sk^{n}=sk^{n}tr^{n} \maps \Ss Set \to \Ss
Set$ for $\C=\Set$ is easy to describe. For any simplicial set
$X_{\bullet}$, its skeleton $Sk^{n}(X_{\bullet})$ is a simplicial
subset of $X_{\bullet}$, which is identical to $X_{\bullet}$ in all
dimensions $k \leq n$, and has only degenerate simplices in all
higher dimensions.

The monad $Cosk^{n}=cosk^{n}tr^{n} \maps \Ss(\C) \to \Ss(\C)$ is
described by the simplicial kernel.

\begin{defi} The $n^{th}$ simplicial kernel of the simplicial object
$X_{\bullet}$ is an object $K_{n}(X_{\bullet})$ in $\C$, together
with morphisms $pr_{j} \maps K_{n}(X_{\bullet}) \to X_{n-1}$ for
$j=0,\ldots,n$, which is universal with respect to relations
$d_{i}pr_{j}=pr_{j-1}d_{i}$, for all $0 \leq i<j \leq n$.
\end{defi}

Now, let we describe in more detail the monad
$Cosk^{n}=cosk^{n}tr^{n} \maps \Ss Set \to \Ss Set$ in the case
$\C=\Set$, that is when we deal with simplicial sets.

The simplicial kernel of the simplicial set $X_{\bullet}$ in
dimension $n$ is a set $K_{n}(X_{\bullet})$ defined by
\[K_{n}(X_{\bullet})=\{(x_{0},x_{1},\ldots,x_{i},\ldots,x_{j},\ldots,x_{n-1},x_{n})|
d_{i}(x_{j})=d_{j-1}(x_{i}), i<j \} \subseteq X_{n-1}^{n+1}\] so
that we can interpret it as the set of all possible sequences of
(n-1)-simplices which could possibly be the boundary of any
n-simplex. If $x \in X_{n}$ is an n-simplex in a simplicial set
$X_{\bullet}$, its boundary $\partial_{n}(x)$ is a sequence of its
(n-1)-faces
\[\partial_{n}(x)=(d_{0}(x),d_{1}(x),\ldots,d_{n-1}(x),d_{n}(x)).\]

Then, for the simplicial set $X_{\bullet}$, the simplicial set
$Cosk^{n}(X_{\bullet})$ is identical to $X_{\bullet}$ in all
dimensions $k \leq n$, and the set of (n+1)-simplices of
$Cosk^{n}(X_{\bullet})$ is defined by
\[Cosk^{n}(X_{\bullet})_{n+1}=K_{n+1}(X_{\bullet})\] while the face
operators are given by the projections $d_{i}=pr_{i} \maps
K_{n+1}(X_{\bullet}) \to X_{n}$ for all $0 \leq i \leq n+1$. All of
the higher dimensional set of simplices of $Cosk^{n}(X_{\bullet})$
are obtained just by inductively iterating the simplicial kernels
\[Cosk^{n}(X_{\bullet})_{n+2}=K_{n+2}(tr^{n+1}Cosk^{n}(X_{\bullet}))\] and
so on.

From the universal property of the $n^{th}$ simplicial kernel
$K_{n}(X_{\bullet})$, we have a canonical morphism
$\delta_{n}=(d_{0},d_{1},\ldots,d_{n-1},d_{n}) \maps X_{n} \to
K_{n}(X_{\bullet})$, called the {\it boundary} of the object of
n-simplices, or briefly the $n^{th}$ boundary morphism.

The first nontrivial component of the unit $\eta \maps Id_{\Sss} \to
Cosk^{n}$ of the adjunction is given by $(n+1)^{th}$ boundary
morphism
\[\delta_{n+1}=(d_{0},d_{1},\ldots,d_{n},d_{n+1}) \maps
X_{n+1} \to Cosk^{n}(X_{\bullet})_{n+1}=K_{n+1}(X_{\bullet})\] and
we have following definitions.

\begin{defi} We say that the simplicial object $X_{\bullet}$ in $\C$ is
coskeletal in dimension $n$, or n-coskeletal, if the unit $\eta
\maps Id_{\Ss Set} \to Cosk^{n}$ of the adjunction is a natural
isomorphism. Similarly, we say that the simplicial object
$X_{\bullet}$ in $\C$ is skeletal in dimension $n$, or n-skeletal,
if the counit $\epsilon \maps Sk^{n} \to Id_{\Ss Set}$ of the
adjunction is a natural isomorphism.
\end{defi}

\begin{defi} We say that the simplicial object $X_{\bullet}$ in $\C$ is
aspherical in dimension $n$ if the $n^{th}$ boundary morphism
$\delta_{n} \maps X_{n} \to K_{n}(X_{\bullet})$ is an epimorphism.
If $X_{\bullet}$ is aspherical in all dimensions, then we say that
it is aspherical.
\end{defi}

In order to define Kan complexes later, we use another universal
construction which formally describe `hollow' simplices, or
simplices in which the $k^{th}$ face is missing.

\begin{defi} The k-horn in dimension $n$ of the simplicial object
$X_{\bullet}$ is an object $\bigwedge^{k}_{n}(X_{\bullet})$ in $\C$,
together with morphisms $p_{i} \maps \bigwedge^{k}_{n}(X_{\bullet})
\to X_{n-1}$ for $i=0,\ldots,n$ and $i \neq k$, which is universal
with respect to relations $d_{i}p_{j}=p_{j-1}d_{i}$, for all $0 \leq
i<j \leq n$ and $i,j \neq k$.
\end{defi}

The set $\bigwedge^{k}_{n}(X_{\bullet})$ of k-horns in dimension $n$
\[ \bigwedge^{k}_{n}(X_{\bullet})=\{(x_{0},x_{1},\ldots,x_{k-1},x_{k+1},\ldots,x_{n-1},x_{n})|
d_{i}(x_{j})=d_{j-1}(x_{i}), i<j, i,j\neq k\} \subseteq
X_{n-1}^{n}\] is the set of all possible sequences of
(n-1)-simplices which could possibly be the boundary of any
n-simplex, except that we $k^{th}$ face is missing. Then for the
simplicial set $X_{\bullet}$, the k-horn map in dimension $n$
\[p_{n}^{k}(x) \maps X_{n} \to \bigwedge^{k}_{n}(X_{\bullet})\] is
defined by the composition of the boundary map $\partial_{n} \maps
X_{n} \to K_{n}(X_{\bullet})$, with the projection $q_{n}^{k}(x)
\maps K_{n}(X_{\bullet}) \to \bigwedge^{k}_{n}(X_{\bullet})$, and it
just omits the $k^{th}$ (n-1)-simplex from the sequence.

If $x \in X_{n}$ is an n-simplex, its k-horn $p_{n}^{k}(x)$ is
defined by the image of the projection of its boundary to the
sequence of faces in which the $k^{th}$ face is omitted
\[p_{n}^{k}(x)=(d_{0}(x),d_{1}(x),\ldots,d_{k-1}(x),d_{k+1}(x),\ldots,d_{n-1}(x),d_{n}(x))\]

Let $(x_{0},x_{1},\ldots,x_{k-1},-,x_{k+1},\ldots,x_{n-1},x_{n}) \in
\bigwedge^{k}_{n}(X_{\bullet})$ be a k-horn in dimension $n$. If
there exists an n-simplex $x \in X_{n}$ such that
\[p_{n}^{k}(x)=(x_{0},x_{1},\ldots,x_{k-1},-,x_{k+1},\ldots,x_{n-1},x_{n})\]
then we say that n-simplex $x$ is a filler of the horn.

\begin{defi} Let $X_{\bullet}$ be an simplicial object in the category $\C$.
We say that the $k^{th}$ Kan condition in dimension $n$ is satisfied
for $X_{\bullet}$ if the k-horn morphism
\[p_{n}^{k}(x) \maps X_{n} \to \bigwedge^{k}_{n}(X_{\bullet})\] is
an epimorphism. The condition is satisfied exactly if the above
morphism is an isomorphism. If Kan conditions are satisfied for all
$0 < k < n$ and for all $n$, then we say that $X_{\bullet}$ is a
weak Kan complex. Finally, if Kan conditions are satisfied for
extremal horns as well $0 \leq k \leq n$ and for all $n$, then we
say that $X_{\bullet}$ is a Kan complex.
\end{defi}

This condition can be stated entirely in the topos theoretic context
by using the sieves

\[ \bigwedge^{k}[n] \hookrightarrow \overset{\bullet}{\Delta}[n] \hookrightarrow
\Delta[n]\] in $\Ss Set$, where $\Delta[n]$ is the {\it standard
n-simplex}, which is just the simplicial set represented by the
ordinal $[n]$. The simplicial set $\overset{\bullet}{\Delta}[n]$ is
the {\it boundary of the standard n-simplex} which is identical to
standard n-simplex in all dimensions bellow $n$, and has only
degenerate simplices in higher dimensions. It is defined by the
(n-1)-skeleton $\overset{\bullet}{\Delta}[n]=Sk^{n-1}(\Delta[n])$ of
the standard n-simplex. The simplicial set $\bigwedge^{k}[n]$ is the
{\it k-horn of the standard n-simplex}, which is identical to
$\overset{\bullet}{\Delta}[n]$ except that it is not generated by
the simplex $\delta_{k} \maps [n-1] \to [n]$.

Using the Yoneda lemma
\[Hom_{\Ss Set}(\Delta[n],X_{\bullet}) \simeq  X_{n}\] the $n^{th}$ Kan
condition says that for any simplicial map $\bar{x} \maps
\bigwedge^{k}[n] \to X_{\bullet}$, there exist a simplicial map $x
\maps \Delta[n] \to X_{\bullet}$ such that the diagram
\[\xymatrix@!=3pc{\bigwedge^{k}[n] \ar@{^{(}->}[d]
\ar[r]^-{\bar{x}} & X_{\bullet} \\
\Delta[n] \ar[ur]_{x} & }\] commutes.

\begin{rem}
The $n^{th}$ Kan condition is equivalent to the injectivity of the
simplicial set $X_{\bullet}$ with respect to monomorphisms
$\bigwedge^{k}[n] \hookrightarrow \Delta[n]$ for all $0 \leq k \leq
n$. In this terms, Kan complex $X_{\bullet}$ is a simplicial set
which is injective with respect to all monomorphisms
$\bigwedge^{k}[n] \hookrightarrow \Delta[n]$ for all $0 \leq k \leq
n$, and all $n \geq 0$.
\end{rem}

\begin{prop} Every aspherical simplicial object $X_{\bullet}$ is a Kan simplicial object.
\begin{proof} We will use the Barr embedding theorem and prove it in $\Set$.
Consider the diagram
\[\xymatrix@!=3pc{ & X_{n} \ar[dl]_{\delta_{n}}
\ar[dr]^-{p_{n}^{k}} & \\
K_{n}(X_{\bullet}) \ar[rr]_{q_{n}^{k}} &&
\bigwedge^{k}_{n}(X_{\bullet})}\] and a k-horn
$(x_{0},x_{1},\ldots,x_{k-1},-,x_{k+1},\ldots,x_{n},x_{n+1}) \in
\bigwedge^{k}_{n+1}(X_{\bullet})$. If there exists a filler $x \in
X_{n+1}$ for which
$p_{n+1}^{k}(x)=(x_{0},x_{1},\ldots,x_{k-1},-,x_{k+1},\ldots,x_{n},x_{n+1})$
then its k-face $d_{k}(x)=x_{k}$ has a boundary uniquely determined
by the simplices $x_{i}$ for $i \neq k$ since
\[d_{i}(x_{k})= \left\lbrace
\begin{array}{l} d_{k-1}(x_{i}) \hspace{2cm} 0 \leq i < k \leq n+1\\
d_{k}(x_{i+1}) \hspace{2cm} 0 \leq k \leq i \leq n+1
\end{array}\right. \] and therefore
$(d_{0}(x_{k}),d_{1}(x_{k}),\ldots,d_{n-1}(x_{k}),d_{n}(x_{k})) \in
K_{n}(X_{\bullet})$. Since we supposed that $\delta_{n} \maps X_{n}
\to K_{n}(X_{\bullet})$ is an epimorphism, then such a simplex
$x_{k} \in X_{n}$ really exists, and we conclude that the morphism
$q_{n+1}^{k} \maps K_{n+1}(X_{\bullet}) \to
\bigwedge^{k}_{n+1}(X_{\bullet})$ is also an epimorphism. But this
is true for all $n$, and it follows that $p_{n+1}^{k} \maps X_{n+1}
\to \bigwedge^{k}_{n+1}(X_{\bullet})$ is an epimorphism as a
composition of epimorphisms, and therefore $X_{\bullet}$ is a Kan
simplicial set.
\end{proof}
\end{prop}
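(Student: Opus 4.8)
The plan is to reduce the whole question to the category of sets and then to split the horn map into its two natural factors. Since the simplicial kernels $K_n(X_{\bullet})$ and the horn objects $\bigwedge^{k}_{n}(X_{\bullet})$ are built from finite limits, and since the full exact embedding supplied by the Barr embedding theorem preserves finite limits and reflects epimorphisms (epimorphisms in the target functor category being computed pointwise, so ultimately in $\Set$), it suffices to prove the statement for $\C=\Set$: assuming every boundary map $\delta_{n}\maps X_{n}\to K_{n}(X_{\bullet})$ is surjective, I must show every horn map $p_{n}^{k}\maps X_{n}\to\bigwedge^{k}_{n}(X_{\bullet})$ is surjective for all $0\leq k\leq n$ and all $n$.

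First I would recall the factorization $p_{n}^{k}=q_{n}^{k}\circ\delta_{n}$, where $q_{n}^{k}\maps K_{n}(X_{\bullet})\to\bigwedge^{k}_{n}(X_{\bullet})$ is the projection forgetting the $k^{th}$ coordinate. Asphericity makes $\delta_{n}$ surjective, so it is enough to prove that each $q_{n}^{k}$ is surjective. Fix a $k$-horn $(x_{0},\ldots,x_{k-1},-,x_{k+1},\ldots,x_{n})\in\bigwedge^{k}_{n}(X_{\bullet})$. The idea is to manufacture the missing $(n-1)$-face $x_{k}$ from asphericity one dimension down. Define a tuple $y=(y_{0},\ldots,y_{n-1})$ of $(n-2)$-simplices by $y_{i}=d_{k-1}(x_{i})$ for $0\leq i<k$ and $y_{i}=d_{k}(x_{i+1})$ for $k\leq i\leq n-1$. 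Using the simplicial identities together with the horn compatibility relations $d_{i}(x_{j})=d_{j-1}(x_{i})$ for $i<j$ and $i,j\neq k$, one checks that $y\in K_{n-1}(X_{\bullet})$, i.e. $d_{a}(y_{b})=d_{b-1}(y_{a})$ for all $a<b$.

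By asphericity in dimension $n-1$ the map $\delta_{n-1}$ is surjective, so there exists $x_{k}\in X_{n-1}$ with $\delta_{n-1}(x_{k})=y$, i.e. $d_{i}(x_{k})=d_{k-1}(x_{i})$ for $i<k$ and $d_{i}(x_{k})=d_{k}(x_{i+1})$ for $i\geq k$. A second, entirely parallel bookkeeping check then shows that the completed tuple $(x_{0},\ldots,x_{k-1},x_{k},x_{k+1},\ldots,x_{n})$ satisfies every relation $d_{i}(x_{j})=d_{j-1}(x_{i})$ and hence lies in $K_{n}(X_{\bullet})$; by construction $q_{n}^{k}$ sends it to the given horn. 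Thus $q_{n}^{k}$ is surjective, and therefore $p_{n}^{k}=q_{n}^{k}\circ\delta_{n}$ is surjective for all $n$ and all $0\leq k\leq n$, so $X_{\bullet}$ is a Kan complex.

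The only genuinely delicate point is the index bookkeeping around the omitted position $k$: verifying that the candidate boundary $y$ of the missing face lies in $K_{n-1}(X_{\bullet})$, and that adjoining the lift $x_{k}$ yields an element of $K_{n}(X_{\bullet})$. These follow from the simplicial identities, but one must separately treat the ranges $i<j\leq k$, $i<k\leq j$ and $k\leq i<j$ and keep careful track of the shift $i\mapsto i+1$ introduced by dropping the $k^{th}$ coordinate. A minor secondary point to spell out is the legitimacy of passing to $\Set$, namely that the Barr embedding preserves the finite limits defining $K_{n}$ and $\bigwedge^{k}_{n}$ and reflects epimorphisms.
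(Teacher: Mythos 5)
Your proposal is correct and follows essentially the same argument as the paper: reduce to $\Set$ via the Barr embedding, factor the horn map as $p_{n}^{k}=q_{n}^{k}\circ\delta_{n}$, and prove $q_{n}^{k}$ surjective by assembling the boundary of the missing $k^{th}$ face from the horn data and filling it using asphericity one dimension lower (the paper phrases this for horns in dimension $n+1$, so its two uses of asphericity occur at levels $n$ and $n+1$ rather than your $n-1$ and $n$, but this is only an index convention). Your explicit remarks that the Barr embedding must preserve the relevant finite limits and reflect epimorphisms, and that one must also verify the completed tuple lies in the simplicial kernel, make precise two points the paper leaves implicit.
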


\begin{rem} For any simplicial set $X_{\bullet}$ the simplicial
kernel $K_{1}(X_{\bullet})$ in dimension $1$ is equal to the product
$K_{1}(X_{\bullet})=X_{0} \times X_{0}$. For the augmented
simplicial set $X_{\bullet} \to X_{-1}$, when we have
$K_{1}(X_{\bullet})=X_{0} \times_{X_{-1}} X_{0}$. The set of k-horns
is given by $\bigwedge^{k}_{1}(X_{\bullet})=X_{0}$ for $k=0,1$, and
in each case maps $p_{1}^{k} \maps X_{1} \to
\bigwedge^{k}_{1}(X_{\bullet})$ and $q_{1}^{k} \maps
K_{1}(X_{\bullet}) \to \bigwedge^{k}_{1}(X_{\bullet})$ are always
epimorphisms.
\end{rem}

\begin{defi} A simplicial object $X_{\bullet}$ in $\C$ is said to be
split if there exist a family of morphisms $s_{n+1} \maps X_{n} \to
X_{n+1}$ for all $n \geq 0$, called the contraction for
$X_{\bullet}$, which satisfy all the simplicial identities involving
degeneracies. When a simplicial object is augmented $p \maps X_{0}
\to X_{-1}$ then the contraction includes also a morphism $s_{0}
\maps X_{-1} \to X_{0}$ such that $ps_{0}=id_{X_{-1}}$.
\end{defi}

\begin{rem} Any augmented split simplicial set $X_{\bullet} \to
X_{-1}$ may be seen as the simplicial set $X_{\bullet}$ together
with the homotopy equivalence $d_{\bullet} \maps X_{\bullet} \to
K(X_{-1},0)$ to the constant simplicial set $K(X_{-1},0)$ which has
$X_{-1}$ at each dimension and the identity maps for faces and
degeneracies. This means that there exists a simplicial map
$s_{\bullet} \maps K(X_{-1},0) \to X_{\bullet}$ such that the
compositions $s_{\bullet}d_{\bullet} \simeq id_{X_{\bullet}}$ and
$d_{\bullet}s_{\bullet} \simeq id_{K(X_{-1},0)}$ are homotopic to
respective identity simplicial maps.
\end{rem}
\begin{prop} Every augmented aspherical simplicial
set $X_{\bullet} \to X_{-1}$ is split.
\begin{proof} The proof follows by induction. Let's take any
section $s_{0} \maps X_{-1} \to X_{0}$ and we assume that we have
the $n^{th}$ contraction $s_{n} \maps X_{n-1} \to X_{n}$. Let
$q_{i}(x) \maps X_{n} \to K_{n+1}(X_{\bullet})$ be the $i^{th}$
degeneracy for the $n^{th}$ simplicial kernel of $X_{\bullet}$, and
we define $q_{n+1}(x) \maps X_{n} \to K_{n+1}(X_{\bullet})$ by
\[ q_{n+1}(x)=(s_{n}d_{0}(x),s_{n}d_{1}(x),\ldots,s_{n}d_{n-1}(x),s_{n}d_{n}(x)). \]
Now let's choose the splitting $s \maps K_{n+1}(X_{\bullet}) \to
X_{n+1}$ of the $(n+1)^{th}$ boundary map $\delta_{n+1}(x) \maps
X_{n+1} \to K_{n+1}(X_{\bullet})$, which is a surjection by
assumption, such that $s_{i}=s q_{i}$ for all $0 \leq i \leq n$.
Then the contraction $s_{n+1} \maps X_{n} \to X_{n+1}$ defined by
$s_{n+1}=s q_{n+1}$ satisfy all the identities involving
degeneracies since $q_{n+1}=\delta_{n+1}s
q_{n+1}=\delta_{n+1}s_{n+1}$.
\end{proof}
\end{prop}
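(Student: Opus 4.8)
The plan is to construct the contraction inductively on the dimension, producing the extra degeneracy $s_{n+1} \maps X_{n} \to X_{n+1}$ one level at a time, arranging at each stage that it is compatible with the genuine degeneracies of $X_{\bullet}$. For the base case, asphericity in dimension $0$ means precisely that the augmentation $p=\delta_{0} \maps X_{0}\to K_{0}(X_{\bullet})=X_{-1}$ is an epimorphism, so it admits a section $s_{0} \maps X_{-1}\to X_{0}$ with $ps_{0}=id_{X_{-1}}$; this is the bottom piece of the contraction.

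For the inductive step, suppose the extra degeneracy $s_{n} \maps X_{n-1}\to X_{n}$ has already been built, compatibly with $s_{0},\dots,s_{n-1}$. First I would write down the candidate boundary of $s_{n+1}(x)$: define $q_{n+1} \maps X_{n}\to X_{n}^{\,n+2}$ sending $x$ to the sequence whose $j$-th entry is $s_{n}d_{j}(x)$ for $0\le j\le n$ and whose last entry is $x$ itself (so that $d_{n+1}s_{n+1}$ will be $id$). A short check, using the simplicial identities $d_{i}s_{n}=s_{n-1}d_{i}$ for $i<n$, $d_{n}s_{n}=id$ and $d_{i}d_{j}=d_{j-1}d_{i}$ for $i<j$, shows that this sequence satisfies the defining relations of the simplicial kernel, so $q_{n+1}$ factors through $K_{n+1}(X_{\bullet})$. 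Alongside it I would record the maps $q_{i}:=\delta_{n+1}s_{i} \maps X_{n}\to K_{n+1}(X_{\bullet})$ for $0\le i\le n$, which are exactly the degeneracy operators of $Cosk^{n}(X_{\bullet})$ in dimension $n+1$.

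Now invoke asphericity in dimension $n+1$: the boundary $\delta_{n+1} \maps X_{n+1}\to K_{n+1}(X_{\bullet})$ is an epimorphism, hence admits a section. I would choose a section $s \maps K_{n+1}(X_{\bullet})\to X_{n+1}$ that additionally satisfies $sq_{i}=s_{i}$ for the genuine degeneracies $s_{i} \maps X_{n}\to X_{n+1}$, $0\le i\le n$ — i.e. one which on the ``degenerate part'' of the simplicial kernel already agrees with the degeneracies of $X_{\bullet}$ — and then set $s_{n+1}:=s\,q_{n+1}$. From $\delta_{n+1}s=id$ one gets $\delta_{n+1}s_{n+1}=q_{n+1}$, and applying $pr_{j}$ yields the face identities $d_{j}s_{n+1}=s_{n}d_{j}$ for $j\le n$ and $d_{n+1}s_{n+1}=id_{X_{n}}$; the remaining simplicial identities, relating $s_{n+1}$ to $s_{0},\dots,s_{n}$, come out of the compatibility $sq_{i}=s_{i}$ together with the degeneracy relations among the $q_{i}$ in the coskeleton. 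Carrying this through for every $n$ assembles the full contraction $\{s_{n+1}\}_{n\ge 0}$, witnessing that $X_{\bullet}\to X_{-1}$ is split.

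The delicate point — the one I expect to be the real obstacle — is the coherent choice of the section $s$ of $\delta_{n+1}$: it must be compatible with \emph{all} previously constructed degeneracies, not merely exist pointwise. Concretely, one must first verify that the prescription $sq_{i}=s_{i}$ is consistent on the union of the images of $q_{0},\dots,q_{n}$ inside $K_{n+1}(X_{\bullet})$: whenever $q_{i}(x)=q_{j}(x')$, one checks, using that $d_{i}s_{i}=id$ forces the colliding simplices $x,x'$ to be suitably degenerate and then using the degeneracy--degeneracy identities $s_{i}s_{j}=s_{j+1}s_{i}$, that indeed $s_{i}(x)=s_{j}(x')$. Only after this is one free to extend $s$ over the remaining part of $K_{n+1}(X_{\bullet})$ by picking arbitrary preimages under the epimorphism $\delta_{n+1}$. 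One must also keep careful track of exactly which ``identities involving degeneracies'' the contraction is required to satisfy, and check that each reduces either to $\delta_{n+1}s_{n+1}=q_{n+1}$ or to the compatibility $sq_{i}=s_{i}$ at the appropriate level; this bookkeeping is routine but is precisely where the full strength of the simplicial identities is used.
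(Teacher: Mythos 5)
Your proposal follows essentially the same inductive argument as the paper: build the candidate boundary $q_{n+1}$ from the previously constructed contraction, choose a section $s$ of the boundary epimorphism $\delta_{n+1}\maps X_{n+1}\to K_{n+1}(X_{\bullet})$ compatible with the genuine degeneracies ($sq_{i}=s_{i}$), and set $s_{n+1}=sq_{n+1}$. You are in fact somewhat more careful than the paper's write-up, which omits the final coordinate $x$ from its displayed formula for $q_{n+1}(x)$ (needed so that $d_{n+1}s_{n+1}=id$ and so that the sequence has the right length to lie in $K_{n+1}(X_{\bullet})$) and leaves implicit the consistency check for prescribing $s$ on the images of $q_{0},\dots,q_{n}$, which you rightly flag as the delicate point.
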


An n-truncation functor has the extension to the {\it augmented
n-truncation functor}
\[tr^{n}_{a} \maps \Ss_{a}(\C) \to \Ss^{n}_{a}(\C)\] from the category
$\Ss_{a}(\C)$ of augmented simplicial objects in $\C$ to the
category $\Ss^{n}_{a}(\C)$ of n-truncated augmented simplicial
objects in $\C$. Since $\C$ is finitely complete, it has a right
adjoint $cosk^{n}_{a} \maps \Ss^{n}_{a}(\C) \to \Ss_{a}(\C)$, called
the {\it augmented n-coskeleton functor}. If we regard any augmented
simplicial object $X_{\bullet} \to X_{-1}$ in $\C$ as the ordinary
simplicial object in the slice category $(\C,X_{-1})$, then the
augmented n-coskeleton functor becomes ordinary n-coskeleton functor
in the slice category $(\C,X_{-1})$.

\begin{ex} The category $\C$ may be identified with the category
$\Ss^{-1}_{a}(\C)$ of -1-truncated augmented simplicial objects in
$\C$, and the augmented -1-truncation functor $tr^{-1}_{a} \maps
\Ss_{a}(\C) \to \Ss^{-1}_{a}(\C)$ assigns to any augmented
simplicial object $X_{\bullet} \to X_{-1}$ the object $X_{-1}$ of
$\C$. Its right adjoint is augmented -1-coskeleton functor
$cosk^{-1}_{a} \maps \Ss^{-1}_{a}(\C) \to \Ss_{a}(\C)$ which assigns
to any object $X$ in $\C$ the constant augmented simplicial object
\[\xymatrix{X && X \ar[rr] \ar[ll]_{id} && X_{1} \ar@<-0.5ex>[ll]_{id} \ar@<0.5ex>[ll]^{id}
\ar@<-0.5ex>[rr] \ar@<0.5ex>[rr] && X \ar[ll] \ar@<-1ex>[ll]_{id}
\ar@<1ex>[ll]^{id} \ar[rr] \ar@<-1ex>[rr] \ar@<1ex>[rr] && X \ldots
\ar@<-0.5ex>[ll] \ar@<0.5ex>[ll] \ar@<-1.5ex>[ll]_{id}
\ar@<1.5ex>[ll]^{id} }\] denoted by $K(X,0) \to X$.
\end{ex}

\begin{ex} The category of morphisms $\C^{I}$ of $\C$ may be identified with the category
$\Ss^{0}_{a}(\C)$ of 0-truncated augmented simplicial objects in
$\C$, and the augmented 0-truncation functor $tr^{0}_{a} \maps
\Ss_{a}(\C) \to \Ss^{0}_{a}(\C)$ assigns to any augmented simplicial
object $X_{\bullet} \to X_{-1}$ the morphism $d \maps X_{0} \to
X_{-1}$ of $\C$. Its right adjoint is augmented 0-coskeleton functor
$cosk^{0}_{a} \maps \Ss^{0}_{a}(\C) \to \Ss_{a}(\C)$ which assigns
to any morphism $d \maps X_{0} \to X_{-1}$ in $\C$ the simplicial
kernel of the morphism
\[\xymatrix{X_{-1} & X_{0} \ar[rr] \ar[l]_-{d} && X_{0} \times_{X_{-1}} X_{0}
\ar@<-0.5ex>[ll]_-{pr_{1}} \ar@<0.5ex>[ll]^-{pr_{2}}
\ar@<-0.5ex>[rr] \ar@<0.5ex>[rr] && X_{0} \times_{X_{-1}} X_{0}
\times_{X_{-1}} X_{0} \ar[ll] \ar@<-1ex>[ll]_-{pr_{12}}
\ar@<1ex>[ll]^-{pr_{23}} }\] denoted by $cosk^{0}_{a}(X_{0} \to
X_{-1})$.
\end{ex}

The corresponding monad and the comonad on the category
$\Ss_{a}(\C)$ of augmented simplicial objects in $\C$ are denoted by
$Cosk_{a} \maps \Ss_{a}(\C) \to \Ss_{a}(\C)$ and $Sk_{a} \maps
\Ss_{a}(\C) \to \Ss_{a}(\C)$ respectively, in accordance with the
case of nonaugmented simplicial objects in $\C$.

Another important construction on simplicial objects is given by the
so called {\it shift functor}. For any simplicial object
$X_{\bullet}$ in $\C)$, we restrict the corresponding functor $X
\maps \Delta^{op} \to \C$ to the subcategory of $\Delta^{op}$ with
the same objects, and with the same generators except for the
injections $\partial_{n} \maps [n-1] \to [n]$. If we renumber the
objects in $\Delta^{op}$, so that the ordinal $[n-1]$ becomes $[n]$,
we obtain a simplicial object in $\C$, denoted by
$Dec(X_{\bullet})$, which is augmented to the object $X_{0}$ (or to
the constant simplicial object $Sk^{0}(X_{\bullet})$ in $\C$) and is
contractible with respect to the simplicial map obtained from the
family $(s_{n})_{n \geq 0}$ of extremal degeneracies, as is shown in
the diagram

\begin{equation}
\begin{array}{l}\label{Decalage}
\xymatrix{X_{0} \ar@{=}[rr] \ar@<0.5ex>[d]^{s_{0}} && X_{0}
\ar@{=}[rr] \ar@<0.5ex>[d]^{s^{2}_{0}} && X_{0} \ar@{=}[rr]
\ar@<0.5ex>[d]^{s^{3}_{0}} &&
X_{0} \ar@<0.5ex>[d]^{s^{4}_{0}}&...& Sk^{0}(X_{\bullet}) \ar@<0.5ex>[d]^{S_{0}}\\
X_{1} \ar@<0.5ex>[u]^{d_{0}} \ar[rr] \ar@<0.5ex>[d]^{d_{1}} && X_{2}
\ar@<0.5ex>[u]^{d^{2}_{0}} \ar@<-0.5ex>[ll]_{d_{1}}
\ar@<0.5ex>[ll]^{d_{0}} \ar@<-0.5ex>[rr] \ar@<0.5ex>[rr]
\ar@<0.5ex>[d]^{d_{2}} && X_{3} \ar@<0.5ex>[u]^{d^{3}_{0}} \ar[ll]
\ar@<-1ex>[ll]_{d_{2}} \ar@<1ex>[ll]^{d_{0}} \ar@<0.5ex>[d]^{d_{3}}
\ar[rr] \ar@<-1ex>[rr] \ar@<1ex>[rr] && X_{4}
\ar@<0.5ex>[u]^{d^{4}_{0}} \ar@<0.5ex>[d]^{d_{4}} \ar@<-0.5ex>[ll]
\ar@<0.5ex>[ll] \ar@<-1.5ex>[ll] \ar@<1.5ex>[ll]
&...& Dec(X_{\bullet}) \ar@<0.5ex>[d]^{D_{1}} \ar@<0.5ex>[u]^{D_{0}} \\
X_{0} \ar@<0.5ex>[u]^{s_{0}} \ar[rr]  && X_{1}
\ar@<0.5ex>[u]^{s_{1}} \ar@<-0.5ex>[ll]_{d_{1}}
\ar@<0.5ex>[ll]^{d_{0}} \ar@<-0.5ex>[rr] \ar@<0.5ex>[rr] && X_{2}
\ar@<0.5ex>[u]^{s_{2}} \ar[ll] \ar@<-1ex>[ll]_{d_{2}}
\ar@<1ex>[ll]^{d_{0}} \ar[rr] \ar@<-1ex>[rr] \ar@<1ex>[rr] && X_{3}
\ar@<0.5ex>[u]^{s_{3}} \ar@<-0.5ex>[ll] \ar@<0.5ex>[ll]
\ar@<-1.5ex>[ll] \ar@<1.5ex>[ll] &...& X_{\bullet}
\ar@<0.5ex>[u]^{S_{1}}}
\end{array}
\end{equation}
where the simplicial map $S_{0} \maps Sk^{0}(X_{\bullet}) \to
Dec(X_{\bullet})$ on the right side of the diagram is defined by
$(S_{0})_{n}=(s_{0})^{n}=s_{0} s_{0} \ldots s_{0}$, and the
simplicial map $D_{0} \maps Dec(X_{\bullet}) \to
Sk^{0}(X_{\bullet})$ is defined by $(D_{0})_{n}=(d_{0})^{n}=d_{0}
d_{0} \ldots d_{0}$, for each level $n$. The other two simplicial
maps $S_{1} \maps X_{\bullet} \to Dec(X_{\bullet})$ and $D_{1} \maps
Dec(X_{\bullet}) \to X_{\bullet}$ are defined by $(S_{1})_{n}=s_{n}$
and $(D_{1})_{n}=d_{n}$ respectively.

The above construction extends to a functor
\[ Dec \maps \Ss(\C) \to \Ss_{as}(\C)\]
from the category of simplicial objects in $\C$, to the category
$\Sss_{as}(\C)$ of augmented split simplicial objects in $\C$. This
functor has a left adjoint, given by the forgetful functor
\[U \maps \Ss_{ac}(\C) \to \Ss(\C)\]
which forgets the augmentation and a splitting. Thus, for any split
augmented simplicial object $A_{\bullet} \to A_{-1}$ in
$\Ss_{ac}(\C)$, and any simplicial object $X_{\bullet}$ in
$\Ss(\C)$, we have a natural bijection
\[\theta_{A_{\bullet},X_{\bullet}} \maps Hom_{\Ss(\C)}(U(A_{\bullet}),X_{\bullet})
\overset{\simeq}{\to}
Hom_{\Ss_{as}(\C)}(A_{\bullet},Dec(X_{\bullet}))\] which takes any
simplicial map $f_{\bullet} \maps U(A_{\bullet}) \to X_{\bullet}$ to
its composite with the splitting
\[\xymatrix@!=2pc{A_{-1} \ar@{-->}[drr]_{s_{0}} \ar@/^2pc/[rr]^{s_{0}} &&
A_{0} \ar[ll]_{d} \ar[rr] \ar[d]^{f_{0}}
\ar@{-->}[drr]^(0.6){f_{1}s_{1}} \ar@/^2pc/[rr]^{s_{1}} && A_{1}
\ar@<-0.5ex>[ll]_{d_{1}} \ar@<0.5ex>[ll]^{d_{0}} \ar@<-0.5ex>[rr]
\ar@<0.5ex>[rr] \ar[d]^{f_{1}} \ar@{-->}[drr]^(0.6){f_{2}s_{2}}
\ar@/^2pc/[rr]^{s_{2}} && A_{2} \ar[ll] \ar@<-1ex>[ll]_{d_{2}}
\ar@<1ex>[ll]^{d_{0}} \ar[d]^{f_{2}} \ar[rr] \ar@<-1ex>[rr]
\ar@<1ex>[rr] \ar@{-->}[drr]^(0.6){f_{3}s_{3}}
\ar@/^2pc/[rr]^{s_{3}} && A_{3} \ar[d]^{f_{3}}
\ar@<-0.5ex>[ll] \ar@<0.5ex>[ll] \ar@<-1.5ex>[ll]_{d_{3}} \ar@<1.5ex>[ll]^{d_{0}} \\
&& X_{0} \ar[rr] && X_{1} \ar@<-0.5ex>[ll]_{d_{1}}
\ar@<0.5ex>[ll]^{d_{0}} \ar@<-0.5ex>[rr] \ar@<0.5ex>[rr] && X_{2}
\ar[ll] \ar@<-1ex>[ll]_{d_{2}} \ar@<1ex>[ll]^{d_{0}} \ar[rr]
\ar@<-1ex>[rr] \ar@<1ex>[rr] && X_{3} \ar@<-0.5ex>[ll]
\ar@<0.5ex>[ll] \ar@<-1.5ex>[ll]_{d_{3}} \ar@<1.5ex>[ll]^{d_{0}} }\]
as in the above diagram.
\newpage
In order to compare later our 2-torsors with Glenn's simplicial
2-torsors we will recall some basic definitions from \cite{Gl}.

\begin{defi} A simplicial map $\Lambda_{\bullet} \maps
\E_{\bullet} \to \B_{\bullet}$ is said to be an exact fibration in
dimension $n$, if for all $0 \leq k \leq n$, the diagrams
\[\xymatrix@!=4pc{E_{n} \ar[d]_{p_{\bar{k}}} \ar[r]^-{\lambda_{n}} & B_{n} \ar[d]^{p_{\bar{k}}} \\
\bigwedge^{k}_{n}(\E_{\bullet}) \ar[r]^{} &
\bigwedge^{k}_{n}(\B_{\bullet})}\] are pullbacks. It is called an
exact fibration if it is an exact fibration in all dimensions $n$.
\end{defi}

Using the language of simplicial algebra, Glenn defined actions and
n-torsors over n-dimensional hypergroupoids. This objects morally
play the role of the n-nerve of weak n-groupoids, and we give their
formal definition.

\begin{defi} An n-dimensional Kan hypergroupoid is a Kan simplicial object
$G_{\bullet}$ in $\E$ such that the canonical map $G_{m} \to
\bigwedge^{k}_{m}(G_{\bullet})$ is an isomorphism for all $m > n$
and $0 \leq k \leq m$.
\end{defi}

\begin{rem} The term n-dimensional hypergroupoid was introduced by Duskin
\cite{Du2}, for any simplicial object satisfying the above condition
without being Kan simplicial object. One of his motivational
examples was the standard simplicial model for an Eilenberg-MacLane
space $K(A,n)$, for any abelian group object $A$ in $\E$. In
\cite{Be2}, Beke used the term an exact n-type to emphasize the
meaning of these objects as algebraic models for homotopy n-types.
\end{rem}

\begin{defi} An action of the n-dimensional hypergroupoid is an internal simplicial
map $\Lambda_{\bullet} \maps \Pe_{\bullet} \to \B_{\bullet}$ in $\E$
which is an exact fibration for all $m \geq n$.
\end{defi}

\begin{defi} An action $\Lambda_{\bullet} \maps P_{\bullet}
\to \B_{\bullet}$ is the n-dimensional hypergroupoid n-torsor over
$X$ in $\E$ if $P_{\bullet}$ is augmented over $X$, aspherical and
n-1-coskeletal ($P_{\bullet} \simeq Cosk^{n-1}(P_{\bullet}))$.
\end{defi}

\section{Bicategories}
Bicategories were defined by Benabou \cite{Be}, and from the modern
perspective, we could call them weak 2-categories. Instead of
stating their original definition we will use Batanin's approach to
weak n-categories given in \cite{Ba}. In this approach a bicategory
$\B$, given by the reflexive 2-graph

\[ \xymatrix{\B \equiv (B_{2} \ar@<1ex>[r]^-{d^{1}_{1}}
\ar@<-1ex>[r]_-{d^{1}_{0}} & B_{1} \ar[l] \ar@<1ex>[r]^-{d^{0}_{1}}
\ar@<-1ex>[r]_-{d^{0}_{0}} & B_{0} \ar[l])}\] \\is a 1-skeletal
monoidal globular category, given by the diagram of categories and
functors
\[\xymatrix{\B_{1} \ar@<1ex>[r]^{D_{1}} \ar@<-1ex>[r]_{D_{0}} & \B_{0} \ar[l]}\]
where the category $\B_{1}$ is the category of morphisms of the
bicategory $\B$ and the category $\B_{0}$ is the image $\D(B_{0})$
of the discrete functor $\D \maps \Set \to \Cat$ which just turns an
object of $\E$ into a discrete internal category in $\E$. Source
functor $D_{1}$ is defined by $D_{1}:=d^{0}_{1} \maps B_{1} \to
B_{0}$ and $D_{1}:=d^{0}_{1}d^{1}_{1}=d^{0}_{1}d^{1}_{0} \maps B_{2}
\to B_{0},$ and a target functor $D_{0}$ is defined by
$D_{0}:=d^{0}_{1} \maps B_{1} \to B_{0}$ and
$D_{1}:=d^{0}_{0}d^{1}_{1}=d^{0}_{0}d^{1}_{1} \maps B_{2} \to
B_{0}$, where we used the same notation for objects and morphisms
parts of the functor. Also, the unit functor $I \maps B_{0} \to
B_{1}$ is defined by $I:=s_{0} \maps B_{0} \to B_{1}$ on the level
of objects, and $I:=s_{1} \maps B_{1} \to B_{2}$ on the level of
morphisms, where $s_{0} \maps B_{0} \to B_{1}$ and $s_{1} \maps
B_{1} \to B_{2}$ are section morphisms in the above 2-graph from
left to right, which we didn't label to avoid too much indices.

In the lower definition of a bicategory we will denote the vertex
$\B_{1} \times_{\B_{0}} \B_{1}$ of the following pullback of
functors
\[\xymatrix@!=6pc{\B_{1} \times_{\B_{0}} \B_{1} \ar[r]^{Pr_{2}} \ar[d]_{Pr_{1}} & \B_{1} \ar[d]^{D_{0}}\\
\B_{1} \ar[r]_{D_{1}} & \B_{0}}\]  by $\B_{2}:=\B_{1}
\times_{\B_{0}} \B_{1}$ and likewise $\B_{3}:=\B_{1} \times_{\B_{0}}
\B_{1} \times_{\B_{0}} \B_{1}$, and so on. Thus we will adopt the
following convention: for any functor $P \maps \E \to B_{0}$, the
first of the symbols
\[\xymatrix{\E \times_{\B_{0}} \B_{1}} \xymatrix{and} \xymatrix{\B_{1} \times_{\B_{0}}
\E}\] will denote the pullback of $P$ and $D_{0}$, and the second
one that of $D_{1}$ and $P$.
\begin{defi}
A {\it bicategory} $\B$ consists of the following data:
\begin{itemize}

\item two categories, a discrete category $\B_{0}$ of objects, and
a category $\B_{1}$ of morphisms of the weak 2-category $\B$,

\item functors $D_{0},D_{1} \maps \B_{1} \to \B_{0}$, called
target and source functors, respectively, a functor $I \maps \B_{0}
\to \B_{1}$, called unit functor, and a functor $H \maps \B_{2} \to
\B_{1}$, called the horizontal composition functor,

\item natural isomorphism
\[\xymatrix@!=6pc{\B_{3} \ar[r]^{H \times Id_{\B_{1}}}
\ar[d]_{Id_{\B_{1}} \times H} \drtwocell<\omit>{<0>\alpha} & \B_{2} \ar[d]^{H}\\
\B_{2} \ar[r]_{H} & \B_{1}}\]

\item natural isomorphisms
\[\xymatrix@!=6pc{& \B_{2} \ar[d]_{H} \drtwocell<\omit>{<5>\rho} &\\
\B_{1} \urtwocell<\omit>{<5>\lambda}\ar@{=}[r] \ar[ur]^{S_{1}} &
\B_{1} \ar@{=}[r] & \B_{1} \ar[ul]_{S_{0}}}\] where the functor
$S_{0} \maps \B_{1} \to \B_{2}$ is defined by the composition
\[\xymatrix{\B_{1} \ar[rr]^-{(D_{0},Id_{\B_{1}})} &&
\B_{1} \times_{\B_{0}} \B_{0} \ar[rr]^{I \times Id_{\B_{1}}} &&
\B_{1} \times_{\B_{0}} \B_{1},}\] and the functor $S_{1} \maps
\B_{1} \to \B_{2}$ is defined by the composition
\[\xymatrix{\B_{1} \ar[rr]^-{(Id_{\B_{1}},D_{1})} &&
\B_{0} \times_{\B_{0}} \B_{1} \ar[rr]^{Id_{\B_{1}} \times I} &&
\B_{1} \times_{\B_{0}} \B_{1},}\] or more explicitly for any
1-morphism $f \maps x \to y$ in $\B$ (i.e. object in $\B_{1}$) we
have $S_{0}(f)= (f,i_{x})$ and $S_{1}(f)= (i_{y},f)$,
\end{itemize}
such that following axioms are satisfied:
\begin{itemize}
\item associativity 3-cocycle
\[\xymatrix@!=5pc{\B_{4}
\ddrrtwocell<\omit>{<15> Id_{\B_{1}} \times \alpha}
\ddrrtwocell<\omit>{<-15> \alpha \times Id_{\B_{1}}} \ar[rr]^{H
\times Id_{\B_{2}}} \ar[dr]^{Id_{\B_{1}} \times H \times
Id_{\B_{1}}} \ar[dd]_{Id_{\B_{2}} \times H} &&
\B_{3} \ar[dr]^{H \times Id_{\B_{1}}} \ar'[d][dd]^>(0.4){Id_{\B_{1}} \times H } &\\
& \B_{3} \drtwocell<\omit>{<-2> \simeq} \ar[rr]_>(0.7){H \times
Id_{\B_{1}}} \ar[dd]^>(0.7){Id_{\B_{1}} \times H}
& \drtwocell<\omit>{<5> \alpha} & \B_{2} \ar[dd]^{H} \\
\B_{3} \ar[dr]_{Id_{\B_{1}} \times H} \ar'[r][rr]^>(0.4){H \times
Id_{\B_{1}}} && \B_{2} \ar[dr]^{H}
\drtwocell<\omit>{<-7> \alpha} \drtwocell<\omit>{<7> \alpha}  &\\
& \B_{2} \ar[rr]_{H} && \B_{1}}\]

which for any object $(k,h,g,f)$ in $\B_{4}$ becomes the commutative
pentagon
\[\xymatrix@!=3pc{&&((k \circ h) \circ g) \circ f \ar[dll]_-{\alpha_{k,h,g} \circ f}
\ar[drr]^-{\alpha_{k \circ h,g,f}} &&\\
(k \circ (h \circ g)) \circ f \ar[ddr]_-{\alpha_{k,h \circ g,f}}&&&&
(k \circ h) \circ (g \circ f) \ar[ddl]^-{\alpha_{k,h,g \circ f}}\\&&&&\\
& k \circ ((h \circ g) \circ f)) \ar[rr]_-{k \circ \alpha_{h,g,f}}&&
k \circ (h \circ (g \circ f))&}\] of components of natural
transformations

\item the commutative pyramid
\[\xymatrix@!=5pc{& \B_{2} \ddltwocell<\omit>{<-6> Id_{\B_{1}} \times \rho}
\ar[dd]^{S_{1}} \ar@{=}[dddl] \ar[dddr]^{H} \ar@{=}[ddrr] &&\\
&&&\\
& \B_{3} \drtwocell<\omit>{<0> \alpha} \ar[dl]_>(0.3){Id_{\B_{1}}
\times H} \ar[rr]^{H \times Id_{\B_{1}}} & \uultwocell<\omit>{<2>
\lambda \times Id_{\B_{1}}}& \B_{2} \ar[dl]^{H}\\
\B_{2} \ar[rr]_{H} && \B_{1} & }\] which for any object $(g,f)$ in
$\B_{2}$ becomes the triangle diagram
\[\xymatrix{(g \circ i_{y}) \circ f \ar[ddr]_{\rho_{g} \circ f} \ar[rr]^-{\alpha_{g,i_{y},f}} &&
g \circ (i_{y} \circ f) \ar[ddl]^-{g \circ \lambda_{f}}\\
&&\\
& g \circ f &}\]
\end{itemize}
\end{defi}

\begin{rem} Note that in the above definition of the horizontal
composition functor $H \maps \B_{2} \to \B_{1}$, for any diagram of
2-arrows (i.e. a morphism in a category $\B_{2} \times_{\B_{1}}
\B_{2}$)
\[\xymatrix@!=5pc{x \ruppertwocell^{f_{1}}{\phi_{1}}
\rlowertwocell_{h_{1}}{\psi_{1}} \ar[r] & y
\ruppertwocell^{f_{2}}{\phi_{2}} \rlowertwocell_{h_{2}}{\psi_{2}}
\ar[r] & z}\] by functoriality we immediately have a Godement
interchange law
\[\xymatrix{(\psi_{2} \circ \psi_{1})(\phi_{2} \circ \phi_{1}) =
(\psi_{2} \psi_{1})  \circ (\phi_{2} \phi_{1}). }\]
\end{rem}

\begin{ex}(Strict 2-categories) A weak 2-category in which
associativity and left and right identity natural isomorphisms are
identities is called (strict) 2-category.
\end{ex}

\begin{ex}(Monoidal categories) Monoidal category is a bicategory $\B$
in which $\B_{0}=1$ is terminal discrete category (or one point
set). Strict monoidal category is a one object strict 2-category.
\end{ex}

\begin{ex}(Bicategory of spans) Let $\C$ be a cartesian
category (that is a category with pullbacks). First we make a choice
of the pullback
\[\xymatrix@!=4pc{u \times_{y} v \ar[d]_{p} \ar[r]^-{q} & v \ar[d]^{h}\\
u \ar[r]_-{g} & z}\] for any such diagram $x
\overset{f}{\rightarrow} z \overset{g}{\leftarrow} y$ in a category
$\C$. We construct the weak 2-category $Span(\C)$ of spans in the
category $\C$. The objects of $Span(\C)$ are the same as objects of
$\C$. For any two objects $x,y$ in $Span(\C)$, a 1-morphism $u \maps
x \nrightarrow y$ is a span
\[\xymatrix@!=2pc{& u \ar[dl]_{f} \ar[dr]^{g} &\\ x && y}\] and a
2-morphism $a \maps z \nRightarrow w$ is given by the commutative
diagram
\[\xymatrix@!=2pc{& u \ar[dl]_{f} \ar[dr]^{g} \ar[dd]^{a} &\\ x && y \\
& w \ar[ul]^{l} \ar[ur]_{m} & }\] from which we easily see that
vertical composition of 2-morphisms is given by the composition in
$\C$. Horizontal composition of composable 1-morphisms
\[\xymatrix@!=2pc{& u \ar[dl]_{f} \ar[dr]^{g} && v \ar[dl]_{h} \ar[dr]^{k} &\\ x && y &&
z}\] is given by the pullback
\[\xymatrix@!=2pc{&& u \times_{y} v \ar[dl]_{p} \ar[dr]^{q} &&\\
& u \ar[dl]_{f} \ar[dr]^{g} && v \ar[dl]_{h} \ar[dr]^{k} &\\ x && y
&& z}\] and from here we have obvious horizontal identity $i_{x}
\maps x \nrightarrow x$
\[\xymatrix@!=2pc{& x \ar[dl]_{id_{x}} \ar[dr]^{id_{x}} &\\ x && x}\]
\end{ex}

\begin{ex}(Bimodules) Let $Bim$ denote the bicategory whose objects
are rings with identity. For any two rings $A$ and $B$, $Bim(A,B)$
will be a category of $A-B$ bimodules and their homomorphisms.
Horizontal composition is given by the tensor product, and
associativity and identity constraints are the usual ones for the
tensor product.
\end{ex}

\section{Nerves of bicategories}

In this section, we describe the nerve construction for
bicategories, first given by Duskin in \cite{Du5}. This construction
is a natural outcome of various attempts to describe nerves of
higher dimensional categories and groupoids, whose origin is a
conjecture on a characterization of the nerve of {\it strict}
n-category, in an unpublished work of Roberts. This conjecture was
published by Street in \cite{St4}, and it was finally proved by
Verity \cite{V1}, who characterized nerves of strict n-categories by
means of special simplicial sets, which he called {\it complicial
sets}.

We will derive the construction of the Duskin nerve for bicategories
from the standard description of the geometric nerve. First we have
a fully faithful functor
\begin{equation} \label{embbicat}
\xymatrix{i \maps \Delta \to Bicat}
\end{equation}
where $Bicat$ is a category of bicategories and their homomorphisms,
as it is given in \cite{Be}, so we consider each ordinal as a
locally discrete 2-category. Thus the nerve of the bicategory $\B$
is a simplicial set $N_{2}\B_{\bullet}$ which is defined via the
embedding (\ref{embbicat}) by
\begin{equation} \label{Dusnerve}
\xymatrix{N_{2}\B_{n}:=Hom_{Bicat}(i[n],\B).}
\end{equation}
The 0-simplices of $N_{2}(\B)$ are the objects of $\B$ and
1-simplices are directed line segments
\[\xymatrix{x_{0} \ar[r]^-{f_{01}} & x_{1}}\] which may be seen as
homomorphisms $f \maps [1] \to \B$ from the locally discrete
bicategory $[1]$ to $\B$. Face maps are defined by
$d_{0}(f_{01})=x_{1}$ and $d_{1}(f_{01})=x_{0}$. If $x_{0}$ is a
0-cell of $\B$ then we define the corresponding degenerate 1-simplex
$s_{0}(x_{0})$ by
\[\xymatrix{x_{0} \ar[r]^-{id_{x_{0}}} & x_{0}.}\] A typical 2-simplex is given by the
triangle filled with a 2-morphism $\beta_{012} \maps f_{12} \circ
f_{01} \Rightarrow f_{02}$
\[\xymatrix@!=4pc{x_{0} \drtwocell<\omit>{<-4>\,\,\,\,\,\,\,\beta_{012}}
\ar[r]^-{f_{1}} \ar[dr]_-{f_{12}} & x_{1} \ar[d]^-{f_{2}}\\
& x_{2}}\] where $f_{ij} \maps [1] \to \B$ is a homomorphism for
which $f_{ij}(0)=x_{i}$ and $f_{ij}(1)=x_{j}$. The face operators
are defined as usual by
\[d_{i}(f_{12},f_{02},f_{01},\beta_{012})= \left\lbrace
\begin{array}{l}
f_{12} \hspace{2cm} i=0\\
f_{02} \hspace{2cm} i=1\\
f_{01} \hspace{2cm} i=2
\end{array}\right. \] while for a 1-cell $\xymatrix{x_{0} \ar[r]^-{f_{01}} &
x_{1}}$ the degeneracy operators are defined by
\[\begin{array}{c}
s_{0}(f_{01})=\rho_{f_{01}}\\
s_{1}(f_{01})=\lambda_{f_{01}}
\end{array}\] which are the two 2-simplices
\[\xymatrix@!=4pc{x_{0} \drtwocell<\omit>{<-4>\,\,\,\,\,\,\,\rho_{f_{01}}}
\ar[r]^-{id_{x_{0}}} \ar[dr]_-{f_{01}} & x_{0} \ar[d]^-{f_{01}}\\
& x_{1}} \hspace{3pc} \xymatrix@!=4pc{x_{0}
\drtwocell<\omit>{<-4>\,\,\,\,\,\,\,\lambda_{f_{01}}}
\ar[r]^-{f_{01}} \ar[dr]_-{f_{01}} & x_{1} \ar[d]^-{id_{x_{1}}}\\
& x_{1}}\] respectively, where the 1-morphisms $\rho_{f_{01}} \maps
f_{01} \circ id_{x_{0}} \to f_{01}$ and $\lambda_{f_{01}} \maps
id_{x_{1}} \circ f_{01} \to f_{01}$ are the components of the right
and left identity natural isomorphisms in $\B$. The general
3-simplex is of the form
\[\xymatrix@!=4pc{& x_{3} \drtwocell<\omit>{<4>\beta_{123}}  \dltwocell<\omit>{<-4>\beta_{023}} &\\
x_{0} \drtwocell<\omit>{<-4>\beta_{013}} \ar[ur]^{f_{03}}
\ar'[r][rr]^{f_{02}} \ar[dr]_-{f_{01}}  &&
x_{2} \dltwocell<\omit>{<4>\beta_{012}} \ar[ul]_-{f_{23}} \\
& x_{1} \ar[uu]_(0.45){f_{13}} \ar[ur]_-{f_{12}} &}\] such that we
have an identity
\[\beta_{023}(\beta_{012} \circ f_{23})\alpha_{0123}= \beta_{013}(\beta_{123} \circ f_{01})\]
where $\alpha_{0123} \maps (f_{23} \circ f_{12}) \circ f_{01}
\Rightarrow f_{23} \circ (f_{12} \circ f_{01})$, and this condition
follows directly from the coherence for the composition. Since this
construction is given by the geometric nerve (\ref{Dusnerve}) it
follows immediately that the Duskin nerve is functorial with respect
to homomorphisms of bicategories, which leads us to the following
result.

\begin{thm} The Duskin nerve functor $N_{2} \maps Bicat \to \Ss Set$ is fully
faithful.
\begin{proof} An analogous proof that the geometric nerve provides a
fully faithful functor on the category $2-Cat_{lax}$ of 2-categories
and normal lax 2-functors is given in \cite{BBF1}. Then the
statement of the theorem follows immediately for a category $Bicat$
of bicategories and  normal homomorphisms.
\end{proof}
\end{thm}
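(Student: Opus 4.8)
The plan is to prove the statement by producing, for each pair of bicategories $\B$ and $\C$, a bijection $Hom_{Bicat}(\B,\C)\cong Hom_{\Ss Set}(N_{2}\B,N_{2}\C)$ natural in both variables, whose left-hand side is the set of normal homomorphisms; that $F\mapsto N_{2}F$ realises this bijection is then exactly the full faithfulness. Since $N_{2}$ is the singular functor of the embedding $i\maps\Delta\to Bicat$, with $N_{2}\B_{n}=Hom_{Bicat}(i[n],\B)$, it is automatically a functor and $N_{2}F$ is the whiskering of $F$ with the homomorphisms $i[n]\to\B$; so only the bijectivity on hom-sets needs proof. Here I would use that $N_{2}\B$ is $3$-coskeletal \cite{Du5}, so that a simplicial map $\phi\maps N_{2}\B\to N_{2}\C$ amounts to a family $(\phi_{0},\phi_{1},\phi_{2})$ compatible with the face and degeneracy maps in dimensions $\le 2$, subject to the single further constraint that $\phi_{2}$ carry boundary tuples of genuine $3$-simplices of $N_{2}\B$ to boundary tuples of genuine $3$-simplices of $N_{2}\C$ (a $3$-simplex of the Duskin nerve being uniquely determined by its four $2$-faces once these obey the defining identity).

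Given such a $\phi$, I would reconstruct a candidate normal homomorphism $F$ cell by cell: on objects $F:=\phi_{0}$; on a $1$-cell $f\maps x\to y$, regarded as a $1$-simplex, $Ff:=\phi_{1}(f)$, with $d_{0},d_{1}$-compatibility matching source and target and $s_{0}$-compatibility giving $F(i_{x})=i_{Fx}$, i.e. normality; on a $2$-cell $\beta\maps g\Rightarrow h$ between parallel $1$-cells $x\to y$, by applying $\phi_{2}$ to the $2$-simplex with faces $i_{y},h,g$ and filler $\beta\cdot\lambda_{g}\maps i_{y}\circ g\Rightarrow h$ and then setting $F\beta:=\gamma\cdot\lambda_{Fg}^{-1}$, where $\gamma\maps i_{Fy}\circ Fg\Rightarrow Fh$ is the filler of the image $2$-simplex; and the constraint $F_{g,f}\maps Fg\circ Ff\Rightarrow F(g\circ f)$ by applying $\phi_{2}$ to the $2$-simplex with faces $g,g\circ f,f$ and filler $id_{g\circ f}$. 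Faithfulness is then immediate: if $N_{2}F=N_{2}G$, these formulas force $F$ and $G$ to agree on all objects, $1$-cells, $2$-cells and on the coherence constraints, and normality leaves no further data, so $F=G$.

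For fullness one must check that the $F$ extracted from an arbitrary $\phi$ is a well-defined normal homomorphism. Preservation of vertical composition and of identity $2$-cells follows from the simplicial identities among $d_{0},d_{1},d_{2}$ in dimension $2$ together with the action of $\phi_{2}$ on the degenerate $2$-simplices $s_{0}(f)=\rho_{f}$ and $s_{1}(f)=\lambda_{f}$; naturality of $F_{g,f}$ in $f$ and in $g$, and the associativity coherence $F_{g\circ h,f}\,(F_{g,h}\circ Ff)\,F(\alpha_{g,h,f})=F_{g,h\circ f}\,(Fg\circ F_{h,f})$, follow from the constraint that $\phi_{2}$ send the $2$-simplex tuples of $N_{2}\B$ obeying $\beta_{023}(\beta_{012}\circ f_{23})\alpha_{0123}=\beta_{013}(\beta_{123}\circ f_{01})$ to tuples in $N_{2}\C$ obeying the corresponding identity, with the appropriate degenerate $2$-faces inserted to isolate each equation; the unit coherence triangles come out of the same $3$-simplex identity with $s_{0}$ or $s_{1}$ inserted, just as the pentagon and triangle were recovered from the $3$- and $2$-simplices of $N_{2}\B$ itself.

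The step I expect to be the real obstacle is exactly this last verification — that simplicial naturality through dimension $3$ encodes precisely functoriality on $2$-cells, naturality of the composition constraint, and the associativity and unit coherence of a normal homomorphism, and nothing else. Rather than carry out the calculation I would observe that it is verbatim the one performed in \cite{BBF1} for the geometric nerve on the category $2-Cat_{lax}$ of $2$-categories and normal lax $2$-functors: the only structural inputs there are the combinatorics of $2$-simplices and the single defining identity on $3$-simplices, and in the Duskin nerve of a bicategory that identity already carries the associator $\alpha_{0123}$, which was merely an identity in the strict case. Hence the argument transports without change, and \cite{BBF1} yields full faithfulness of $N_{2}\maps Bicat\to\Ss Set$. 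A more abstract alternative is to prove directly that $i\maps\Delta\to Bicat$ is dense, each bicategory being the canonical pseudocolimit of the locally discrete ordinals over it, whence its singular functor is fully faithful; but the explicit cell-by-cell route, in keeping with the rest of the paper, seems preferable.
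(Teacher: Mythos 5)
Your proposal is correct and follows essentially the same route as the paper: the paper's proof consists precisely of the observation that the argument of \cite{BBF1} for the geometric nerve on $2\text{-}Cat_{lax}$ transports to bicategories and normal homomorphisms, which is the reduction you arrive at after sketching the cell-by-cell reconstruction. Your version is in fact more informative, since you make explicit the use of $3$-coskeletality, the recovery of $F$ on $2$-cells via the filler $\beta\cdot\lambda_{g}$, and the recovery of the compositor from the $2$-simplex with filler $id_{g\circ f}$ — all of which the paper leaves implicit in the word ``analogous.''
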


\section{Actions of bicategories}

Now, we will introduce actions of bicategories. It will be clear
from the definition that such actions are categorification of
actions of categories.

\begin{defi}
A right action of a bicategory $\B$ is quintuple
$(\C,\Lambda,A,\kappa,\iota)$ given by:

\begin{itemize}
\item a category $\C$ and a functor $\Lambda \maps \C \to \B_{0}$
to the discrete category of objects $\B_{0}$ of the bicategory $\B$,
called the momentum functor,

\item a functor $A \maps \C \times_{\B_{0}} \B_{1} \to \C$,
called the action functor, and we usually write $A(p,f):=p \tril f$,
for any object $(p,f)$ in $\C \times_{\B_{0}} \B_{1}$, and
$A(a,\phi):=a \tril \phi$ for any morphism $(a,\phi) \maps (p,f) \to
(q,g)$ in $\C \times_{\B_{0}} \B_{1}$,

\item a natural isomorphism
\[\xymatrix@!=8pc{\C \times_{\B_{0}} \B_{1} \times_{\B_{0}} \B_{1} \ar[d]_-{Id_{\C} \times
D_{1}} \ar[r]^-{A \times Id_{\B_{1}}} \drtwocell<\omit>{\kappa}
& \C \times_{\B_{0}} \B_{1} \ar[d]^{A}\\
\C \times_{\B_{0}} \B_{1}  \ar[r]_{A} & \C}\] whose components are
denoted by $\kappa_{p,f,g} \maps (p \tril f)\tril g \to p \tril (f
\circ g)$ for any object $(p,f,g)$ in $\C \times_{\B_{0}} \B_{1}
\times_{\B_{0}} \B_{1}$

\item a natural isomorphism
\[\xymatrix@!=4pc{& \C \times_{\B_{0}} \B_{1} \ar[dr]^{A} &\\
\C \rrtwocell<\omit>{<-4>\iota} \ar[ur]^-{(Id_{\C},I\Lambda)}
\ar@{=}[rr] && \C}\] whose components are denoted by $\iota_{p}
\maps p \tril i_{\Lambda(p)} \to p$ for each object $p$ in $\C$
\end{itemize}
such that following axioms are satisfied:
\begin{itemize}
\item equivariance of the action
\[\xymatrix@!=5pc{\C \times_{\B_{0}} \B_{1} \ar[r]^-{A} \ar[d]_-{Pr_{2}} &
\C \ar[d]^-{\Lambda}\\
\B_{1} \ar[r]_{D_{1}} & \B_{0}}\]

which means that for any object $(p,f)$ in $\C \times_{\B_{0}}
\B_{1}$, we have $\Lambda(p \tril f)=D_{1}(f)$, and for any morphism
$(a,\phi) \maps (p,f) \to (q,g)$ in $\C \times_{\B_{0}} \B_{1}$, we
have $\Lambda(a \tril \phi)=D_{1}(\phi)$,

\item for any object $(p,f,g,h)$ in $\C \times_{\B_{0}} \B_{1}
\times_{\B_{0}} \B_{1} \times_{\B_{0}} \B_{1}$ the following diagram

\[\xymatrix@!=2pc{&&((p \tril f) \tril g) \tril h
\ar[dll]_-{\kappa_{p,f,g} \tril h} \ar[drr]^-{\kappa_{p \tril f,g,h}} &&\\
(p \tril (f \circ g)) \tril h \ar[ddr]_-{\kappa_{p,f \circ g,h}}
&&&&
(p \tril f) \tril (g \circ h) \ar[ddl]^-{\kappa_{p,f,g \circ h}}\\&&&&\\
& p \tril ((f \circ g) \circ h)) \ar[rr]_-{p \tril \alpha_{f,g,h}}
&& p \tril (f \circ (g \circ h)) &}\] commutes,

\item for any object $(p,f)$ in $\C \times_{\B_{0}} \B_{1}$
following diagrams

\[\xymatrix@!=2pc{(p \tril i_{\Lambda_{0}(p)}) \tril f \ar[ddr]_{\iota_{p} \tril f}
\ar[rr]^-{\kappa_{p,i_{\Lambda_{0}(p)},f}} &&
p \tril (i_{\Lambda_{0}(p)} \circ f) \ar[ddl]^-{p \tril \lambda_{f}}\\
&&\\
& p \tril f &} \hspace{1pc} \xymatrix@!=2pc{(p \tril f) \tril
i_{s_{0}(f)} \ar[ddr]_{\iota_{p \tril f} \tril i_{s_{0}(f)}}
\ar[rr]^-{\kappa_{p,f,i_{s_{0}(f)}}} &&
p \tril (f \circ i_{s_{0}(f)}) \ar[ddl]^-{p \tril \rho_{f}}\\
&&\\
& p \tril f &}\] commute.
\end{itemize}
\end{defi}

\begin{rem} Note the fact that $A \maps \C \times_{\B_{0}} \B_{1} \to
\C$ is a functor, immediately implies an interchange law
\[\xymatrix{(b \tril \psi)(a \tril \phi)=(ba) \tril (\psi \phi)}\]
\end{rem}

\begin{defi} Let $\pi \maps \C \to M$ be a bundle of categories over an object $M$ in $\E$.
A (fiberwise) right action of a bicategory $\B$ on a bundle of
categories $\pi \maps \C \to M$ is given by the action of the
bicategory $\B$ on a category $\C$ for which the diagram
\[\xymatrix@!=6pc{\C \times_{\B_{0}} \B_{1} \ar[r]^-{A} \ar[d]_-{Pr_{1}} &
\C \ar[d]^-{\pi}\\
\B_{1} \ar[r]_{\pi} & M}\] commute. We call a bundle $\pi \maps \C
\to M$, a $\B$-2-bundle over $M$.
\end{defi}

\begin{defi} Let $(\C,\Lambda,A,\kappa,\iota)$ and $(\D,A',\Omega,\kappa',\iota')$
be two $\B$-categories. A $\B$-equivariant functor is a pair
$(F,\theta) \maps (\C,\Lambda,A,\kappa,\iota) \to
(\D,A',\Omega,\kappa',\iota')$ consisting of
\begin{itemize}
\item a functor $F \maps \C \to \D$

\item a natural transformations $\theta \maps A' \circ (F \times Id_{\B_{1}}) \Rightarrow
F \circ A$
\[\xymatrix@!=6pc{\C \times_{\B_{0}} \B_{1} \drtwocell<\omit>{\theta}
\ar[r]^-{F \times Id_{\B_{1}}} \ar[d]_-{A}
& \D \times_{\B_{0}} \B_{1} \ar[d]^{A'}\\
\C  \ar[r]_{F} & \D}\]
\end{itemize}
such that following conditions are satisfied:
\begin{itemize}
\item $\Omega \circ F= \Lambda$
\[\xymatrix@!=3pc{\C \ar[dr]_-{\Lambda} \ar[rr]^-{F}
&& \D \ar[dl]^{\Omega}\\
& \B_{0} &}\]

\item the diagram
\[\xymatrix@!=5pc{& \C \times_{\B_{0}} \B_{1} \times_{\B_{0}} \B_{1}
\drtwocell<\omit>{ =} \ar[dl]_{Id_{\C} \times H} \ar[rr]^{F \times
Id_{\B_{1}} \times Id_{\B_{1}}} \ar'[d][dd] &&
\D \times_{\B_{0}} \B_{1} \times_{\B_{0}} \B_{1} \ar[dl]^{Id_{\D} \times H} \ar[dd]^{A' \times Id_{\B_{1}}} \\
\C \times_{\B_{0}} \B_{1} \ar[rr]^(0.65){F \times Id_{\B_{1}}}
\ar[dd]_{A}\drtwocell<\omit>{\,\,\,\,\,\kappa} &
\drtwocell<\omit>{\,\,\,\,\,\theta}
& \D \times_{\B_{0}} \B_{1} \ar[dd]^(0.6){A'} \drtwocell<\omit>{\,\,\,\,\,\kappa'} & \\
& \C \times_{\B_{0}} \B_{1} \drtwocell<\omit>{\,\,\,\,\,\theta}
\ar[dl]_{A} \ar'[r][rr]^(0.3){F \times Id_{\B_{1}}} &&
\D \times_{\B_{0}} \B_{1} \ar[dl]^{A'} \\
\C \ar[rr]_{F}  && \D } \] commutes, which means that we have an
identity of natural transformations
\[\hspace{-1cm} (F \circ \kappa)
[\theta \circ (A \times Id_{\B_{1}})] [A' \circ (\theta \times
Id_{\B_{1}})] = [\theta \circ (Id_{\C} \times H)] [\kappa' \circ (F
\times Id_{\B_{1}} \times Id_{\B_{1}})]\] when evaluated at object
$(p,f,g)$ in $\C \times_{\B_{0}} \B_{1} \times_{\B_{0}} \B_{1}$,
becomes a commutative diagram
\[\xymatrix@!=6pc{(F(p) \tril f) \tril g \ar[r]^{\theta_{p,f} \tril g} \ar[d]_{\kappa'_{F(p),f,g}} &
F(p \tril f) \tril g \ar[r]^{\theta_{p \tril f,g}} & F((p \tril f)
\tril g) \ar[d]^{F(\kappa_{p,f,g})}\\
F(p) \tril (f \circ g) \ar[rr]_{\theta_{p,f \circ g}} && F(p \tril
(f \circ g))}\] in the category $\D$.
\item the diagram
\[\xymatrix@!=2pc{& \C \times_{\B_{0}} \B_{1} \ar[dd]^(0.6){A}
\ar[rrr]^{F \times Id_{\B_{1}}}
&&& \D \times_{\B_{0}} \B_{1} \ar[dd]^{A'}\\
\C \rtwocell<\omit>{<2>\iota} \ar[ur]^{(Id_{\C},I\Lambda)}
\ar@{-->}[rrr]^{F} \ar@{=}[dr] && \drtwocell<\omit>{\theta} &
\D \rtwocell<\omit>{\iota'} \ar@{-->}[ur]^{(Id_{\D},I\Omega)} \ar@{=}[dr] &\\
& \C \ar[rrr]_{F} &&& \D }\] commutes, which means that we have
identity of natural transformations
\[(\iota' \circ Id_{F})Id_{F} = (F \circ \iota) [\theta \circ (Id_{\C},I\Lambda)] Id_{(F,I\Lambda)}\]
when evaluated at object $p$ in $\C$, becomes a commutative diagram
\[\xymatrix@!=4pc{F(p) \tril i_{\Lambda(p)} \ar[rr]^{\theta_{p,i_{\Lambda(p)}}}
\ar[dr]_{\iota'_{F(p)}}
&& F(p \tril i_{\Lambda(p)}) \ar[dl]^{F(\iota_{p})} \\
& F(p)&}\] in the category $\D$.
\end{itemize}
\end{defi}

\begin{defi} A $\B$-equivariant natural transformation $\tau \maps (F,\theta) \Rightarrow (G,\zeta)$
between $\B$-covariant functors $(F,\theta),(G,\zeta) \maps
(\C,\Lambda,\Phi,\alpha,\iota) \to (\D,\Psi,\Omega,\beta,\kappa)$ is
a natural transformation $\tau \maps F \Rightarrow G$ such that
diagram
\begin{equation}
\begin{array}{l}\label{eqnat}
\xymatrix@!=3pc{\C \times_{\B_{0}} \B_{1} \rrtwocell<\omit>{<0>
\,\,\,\,\,\,\,\,\,\,\,\,\,\,,\,\,\,\,\tau \times Id_{\B_{1}}}
\ar@/_2pc/[rr]_{G \times Id_{\B_{1}}} \ar@/^2pc/[rr]^{F \times
Id_{\B_{1}}} \ar[dd]_{A}^{\ }="s" & & \D \times_{\B_{0}} \B_{1}
\ar[dd]^{A'}_{\ }="t"
\\&&\\
\C \rrtwocell<\omit>{<0> \tau} \ar@{->}@/^2pc/[rr]^{F}
\ar@/_2pc/[rr]_{G} && \D \ar@{=>}@/_1.5pc/"s";"t"^{\zeta} |>\hole
\ar@{:>}@/^1.5pc/"s";"t"_{\theta} |>\hole}
\end{array}
\end{equation}
commutes, which means that we have a following identity
\[ \zeta [A' \circ (\tau \times
Id_{\B_{1}})] = (\tau \circ A) \theta \] that becomes a commutative
diagram
\[\xymatrix@!=4pc{F(p) \tril f \ar[r]^{\theta_{p,f}}
\ar[d]_{\tau_{p} \tril f} & F(p \tril f) \ar[d]^{\tau_{p \tril f}} \\
G(p) \tril f \ar[r]_{\zeta_{p,f}} & G(p \tril f)}\] in the category
$\D$, when evaluated at object $p$ in $\C$.
\end{defi}

The above construction gives rise to the 2-category in an obvious
way, so we have a following theorem.

\begin{thm} The class of $\B$-categories, $\B$ equivariant
functors and their natural transformations form a 2-category.
\end{thm}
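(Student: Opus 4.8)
The plan is to verify directly that the data we have assembled --- $\B$-categories as objects, $\B$-equivariant functors $(F,\theta)$ as $1$-cells, and $\B$-equivariant natural transformations $\tau$ as $2$-cells --- satisfies the axioms of a $2$-category. I would proceed in three stages: first exhibit composition operations and identities, then check that the $2$-cell structure on each hom is a category, and finally check the interchange and functoriality laws tying the two compositions together.

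First I would define the compositions. Given $\B$-equivariant functors $(F,\theta)\maps (\C,\Lambda,A,\kappa,\iota)\to(\D,A',\Omega,\kappa',\iota')$ and $(G,\psi)\maps (\D,\ldots)\to(\E,\ldots)$, their composite is $(G\circ F,\ \psi\ast\theta)$ where the composite natural transformation $\psi\ast\theta$ is the usual pasting
\[(\psi\ast\theta) \;=\; (G\circ\theta)\,\bigl[\psi\circ(F\times Id_{\B_{1}})\bigr]\maps A''\circ(GF\times Id_{\B_{1}})\Rightarrow GF\circ A.\]
One must check this composite satisfies the three conditions of Definition~5.4: the momentum equation $\Omega''\circ GF=\Lambda$ is immediate from $\Omega'\circ G=\Omega$ and $\Omega\circ F=\Lambda$; the compatibility with $\kappa$ follows by pasting the two hexagonal/commuting prisms for $(F,\theta)$ and $(G,\psi)$ along the shared face, i.e.\ by stacking the two commutative squares of components at $(p,f,g)$; the compatibility with $\iota$ follows similarly by stacking the two unit triangles. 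The identity $1$-cell on $(\C,\Lambda,A,\kappa,\iota)$ is $(Id_{\C},id_{A})$, and unitality and associativity of $1$-cell composition reduce to the corresponding facts for functors together with the well-known pasting (whiskering) identities for natural transformations, which are strict. For $2$-cells, vertical composition of $\B$-equivariant natural transformations is vertical composition of the underlying natural transformations; one checks the square of Definition~5.5 for the composite by stacking the two given squares vertically. Horizontal composition of a $2$-cell $\tau\maps(F,\theta)\Rightarrow(G,\psi)$ with $\sigma\maps(F',\theta')\Rightarrow(G',\psi')$ (composable along an object) is the Godement product $\sigma\tau$ of the underlying transformations; here I would verify the equivariance square of Definition~5.5 using the interchange law in the target $\B$-category and naturality of $\sigma$.

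Next I would check that for each pair of $\B$-categories the $\B$-equivariant natural transformations with vertical composition form a category: identities are identity natural transformations (which trivially satisfy the square of Definition~5.5 since both legs become identities), and associativity and unitality are inherited from $\Cat$. Then the remaining $2$-categorical axioms --- associativity and unitality of horizontal composition of $2$-cells, the middle-four interchange law, and compatibility of horizontal composition of $2$-cells with horizontal composition of $1$-cells --- all hold because the underlying operations are those of the $2$-category $\Cat$ (or of $\Cat$-internal-to-$\E$), and all the extra data ($\theta$, $\kappa$, $\iota$ and the conditions on them) live in the underlying hom-categories where these laws already hold; so each axiom is obtained by forgetting down to $\Cat$, applying the corresponding law there, and observing the equivariance conditions are preserved under the operations as checked above.

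The main obstacle I expect is bookkeeping rather than conceptual: the verification that the composite $\psi\ast\theta$ is again $\B$-equivariant --- specifically the $\kappa$-compatibility condition --- requires carefully pasting two three-dimensional commutative diagrams (the prisms appearing in the second bullet of Definition~5.4) and keeping track of the whiskering order of $G$ and $F$ against $\theta$, $\psi$, $\kappa$, $\kappa'$, $\kappa''$. I would handle this by evaluating everything at a generic object $(p,f,g)$ of $\C\times_{\B_{0}}\B_{1}\times_{\B_{0}}\B_{1}$, reducing the condition to a diagram chase in the ordinary category $\E''$ that simply glues the two given commuting rectangles along their common edge $\theta_{p,f}\tril g$ composed appropriately, which is then routine. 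The only other point needing a word is that all constructions are performed internally to $\E$, but since they only use finite limits and the pasting calculus, which is available in any finitely complete category, nothing changes.
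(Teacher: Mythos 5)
Your proposal is correct and follows essentially the same approach as the paper, which simply observes that all compositions are induced from those in $\Cat$; you have merely spelled out the routine verifications (closure of the equivariance conditions under composition, interchange, unitality) that the paper's one-sentence proof leaves implicit.
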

\begin{proof} The vertical and horizontal composition in a
2-category is induced from the composition in $\Cat$.
\end{proof}

\noindent Let $\B$ be a bicategory and $\Pe$ a category together
with a momentum functor $\Lambda \maps \Pe \to \B_{0}$
\begin{equation}
\begin{array}{l}\label{biaction}
\xymatrix@!=3pc{P_{1} \ar@<1ex>[d]^-{s} \ar@<-1ex>[d]_{t} & B_{2}
\ar@<1ex>[d]^{s_{1}} \ar@<-1ex>[d]_{t_{1}}\\ P_{0}
\ar[dr]_{\Lambda_{0}} & B_{1}
\ar@<1ex>[d]^{s_{0}} \ar@<-1ex>[d]_{t_{0}}\\
& B_{0}}
\end{array}
\end{equation} and let $\B$ acts on $\Pe$ via an action functor
\begin{equation}
\begin{array}{l}\label{2action}
A \maps \Pe \times_{\B_{0}} \B_{1} \to \Pe
\end{array}
\end{equation}
which satisfies coherence axioms from Definition 13.1. Such actions
allows us to introduce a fundamental objects which we will use
later.

\begin{ex} (Regular Lie groupoids) Recall that a regular Lie groupoid $\G$ is Lie groupoid
\begin{equation}
\begin{array}{l}\label{reglie}
\xymatrix{G \ar@<1ex>[r]^-{s} \ar@<-1ex>[r]_-{t} & M \ar[l]}
\end{array}
\end{equation}
such that for each $x \in M$, the target map $t \maps G \to M$
restricts to a map $t \maps s^{-1}(x) \to M$ of locally constant
rank. Regular Lie groupoids cover many important classes of Lie
groupoids like transitive Lie groupoids, \'etale Lie groupoids and
bundles of Lie groups. Groupoids which arise in foliation theory are
always regular, and Moerdijk gave a classification of regular Lie
groupoids in \cite{Mo}. Any regular Lie groupoid (\ref{reglie}) fits
into extension of Lie groupoids
\begin{equation}
\begin{array}{l}\label{regext}
\xymatrix@!=2pc{K \ar[drr]_{k} \ar[rr]^{j} && G \ar@<.8ex>[d]^-{s}
\ar@<-.8ex>[d]_-{t} \ar[rr]^{\pi} &&
E \ar@<.8ex>[dll]^-{s} \ar@<-.8ex>[dll]_-{t}\\
&& M &&}
\end{array}
\end{equation} where $E$ is the manifold of morphisms of a foliation
groupoid $\E$ over $M$, which means that the fibers of the map
$(s,t) \maps E \to M \times M$ are discrete. Therefore, the category
of regular Lie groupoids $\R Lie(M)$ over $M$ is equipped with a
canonical projection functor
\begin{equation}
\begin{array}{l}\label{projreg}
\xymatrix{\R Lie(M) \ar[d]^-{\Pi} \\ \Fol(M)}
\end{array}
\end{equation}
to the category $\Fol(M)$ of foliation groupoids over $M$. There is
a bigroupoid $\B(M)$ whose discrete groupoid of objects BunGr(M)
consists of bundles of Lie groups over $M$, and whose groupoid of
morphisms $Bitors(M)$ has bitorsors as objects and their
isomorphisms as morphisms. The bigroupoid $\B(M)$ acts on the
projection (\ref{projreg}) with respect to the diagram
\begin{equation}
\begin{array}{l}\label{bitorsact}
\xymatrix@!=4pc{Bitors(M) \ar@<.8ex>[d]^-{S} \ar@<-.8ex>[d]_-{T} & \R Lie(M) \ar[d]^-{\Pi} \ar[dl]^(0.42){\Lambda}\\
BunGr(M) & \Fol(M)}
\end{array}
\end{equation}
in which a momentum functor $\Lambda \maps \R Lie(M) \to BunGr(M)$
is defined for a regular Lie groupoid $G$ by $\Lambda(G)=K$, where
$k \maps K \to M$ is bundle of Lie groups from an extension
(\ref{regext}). The (left) action of the bigroupoid $\B(M)$ on the
category $\R Lie(M)$ is defined by a functor
\begin{equation}
\begin{array}{l}\label{projregact}
\A \maps Bitors(M) \times_{BunGr(M)} \R Lie(M) \to \R Lie(M)
\end{array}
\end{equation}
on pairs $({}_{L}P_{K},G)$ where ${}_{L}P_{K}$ is just the notation
for $L-K$ bitorsor over $M$, and $G$ fits into an exact sequence of
groupoids (\ref{regext}), and produces a new regular Lie groupoid $P
\tensor_{K} G \tensor_{K} P^{-1}$ which fits into an exact sequence
of groupoids
\begin{equation}
\begin{array}{l}\label{regextact}
\xymatrix@!=2pc{L \ar[drr]_{l} \ar[rr]^-{j_{P}} && P \tensor_{K} G
\tensor_{K} P^{-1} \ar@<.8ex>[d]^-{s} \ar@<-.8ex>[d]_-{t}
\ar[rr]^-{\pi_{P}} &&
E \ar@<.8ex>[dll]^-{s} \ar@<-.8ex>[dll]_-{t}\\
&& M &&}
\end{array}
\end{equation}
Here, $P \tensor_{K} G \tensor_{K} P^{-1}$ denotes the usual
contracted product of bitorsors (which also defines a horizontal
composition in the bigroupoid $\B(M)$) whose elements are
equivalence classes $(q \tensor g \tensor p^{-1})$ by the
equivalence relation which identifies points $(qk,g,p^{-1})$ and
$(q,kg,p^{-1})$ (as well as points $(q,gk,p^{-1})$ and
$(q,g,kp^{-1})$) in a fiber product $P \times_{K} G \times_{K}
P^{-1}$. We use the notation ${}_{K}P^{-1}_{L}$ for a $K-L$ bitorsor
opposite to ${}_{L}P_{K}$, which has the same underlying space $P$,
but we denote its points by $p^{-1} \in P^{-1}$ just to distinguish
$P^{-1}$ from $P$. The left action of $K$ on $P^{-1}$ is defined by
means of the right action of $K$ on $P$ by
\[ kp^{-1}=(pk^{-1})^{-1}.\]
Then the map $p_{P} \maps P \times_{K} G \times_{K} P^{-1} \to E$ in
the exact sequence (\ref{regextact}) is defined by
\[ \pi_{P}(q \tensor g \tensor p^{-1})=p(g)\] and its kernel is
an adjoint bundle $P \tensor_{K} P^{-1}$ of Lie groups, whose group
law is given by
\[(s \tensor r^{-1})(q \tensor p^{-1})=s(r^{-1}q) \tensor p^{-1}=s \tensor (r^{-1}q)p^{-1}\]
where $(r^{-1}q) \in L$ denotes a value of a division map $\delta
\maps P \times_{M} P^{-1} \to L$  for the right action of $K$ on
$P$, which factors through the tensor product $P \tensor_{K} P^{-1}$
by an (unique) isomorphism
\[\bar{\delta} \maps P \tensor_{K} P^{-1} \to L\]
which shows that the sequence (\ref{regextact}) is exact and its
kernel is canonically isomorphic to $L$. The map $j_{P} \maps L \to
P \tensor_{K} G \tensor_{K} P^{-1}$ is defined for any point $l \in
L_{x}$ in the fiber over $x \in M$ by
\[ j_{P}(l)=(lq \tensor 1_{x} \tensor q^{-1})\] for any point $q \in
P_{x}$, and this map is independent of the choice of $q \in P_{x}$.
Now, if we denote the above action of $P$ on $G$ by $P \tri G$ then
for any $D-L$ bitorsor $Q$ and any $L-K$ bitorsor $P$ we have a
canonical isomorphism
\[\kappa_{Q,P,G} \maps (Q \tensor_{L} P) \tri G \to Q \tri (P \tri G)\]
given by the coherence associativity for the tensor product of
bitorsors.
\end{ex}

\section{Action bicategory}
\begin{thm} For any action (\ref{biaction}) of the bicategory $\B$ on the category $\Pe$,
there exists an {\it action bicategory} $\Pe \tril \B$ consisting of
the following data:
\begin{itemize}
\item Objects of $\Pe \tril \B$ are given by objects $P_{0}$ of the category $\Pe$

\item a 1-morphism is a pair $(\psi,h) \maps q \to p$
which we draw as an arrow
\[\xymatrix{q \ar[r]^-{(\psi,h)} & p }\] where $h \maps \Lambda_{0}(q) \to
\Lambda_{0}(p)$ is a 1-morphism in the bicategory $\B$, and $\psi
\maps q \to p \tril h$ is a morphism in the category $\Pe$, thus it
is an element of $P_{1}$.

\item a 2-morphism $\gamma \maps (\psi,h) \Rightarrow (\xi,l)$
\[\xymatrix@!=4pc{q \rtwocell<8>^{(\psi,h)}_{(\xi,l)}{\gamma} & p}\] is a 2-morphism $\gamma \maps h \Rightarrow
l$ in $B_{2}$, such that the diagram of morphisms in $\Pe$
\[\xymatrix@!=4pc{q \ar[r]^-{\psi} \ar[dr]_-{\xi} & p \tril h  \ar[d]^-{p \tril \gamma}\\
& p \tril l}\] commutes.
\end{itemize}
\begin{proof} We define the composition for any two
composable 1-morphisms
\[\xymatrix{r \ar[r]^-{(\phi,g)} & q \ar[r]^-{(\psi,h)} & p }\]  by
$(\psi,h) \circ (\phi,g)=(\psi \circ \phi,h \circ g) \maps r \to p,$
where $\psi \circ \phi \maps r \to p \tril (h \circ g)$ is a
morphism in $\Pe$, defined by the composition
\[\xymatrix{r \ar[r]^-{\phi} & q \tril g \ar[r]^-{\psi \tril g} &
(p \tril h) \tril g \ar[r]^-{\kappa_{p,h,g}} & p \tril (h \circ
g)}\] and we will show that this composition is a coherently
associative. For any three composable 1-morphisms
\[\xymatrix{s \ar[r]^-{(\varphi,f)} & r \ar[r]^-{(\phi,g)} & q \ar[r]^-{(\psi,h)} & p }\] first we have a morphism
$((\psi \circ \phi) \circ \varphi,(h \circ g) \circ f)$, where the
first term is a composite of
\[\xymatrix{s \ar[r]^-{\varphi} & r \tril f \ar[r]^-{(\psi \circ \phi) \tril f} &
(p  \tril (h \circ g)) \tril f \ar[r]^-{\kappa_{p,h \circ g,f}} & p
\tril ((h \circ g) \circ f)}\] Also we have the composition $(\psi
\circ (\phi \circ \varphi),h \circ (g \circ f)),$  and the first
term is given by a composite
\[\xymatrix{s \ar[r]^-{\phi \circ \varphi} & q \tril (g \circ f) \ar[r]^-{\psi \tril (g \circ f)} &
(p \tril h) \tril (g \circ f) \ar[r]^-{\kappa_{p,h,g \circ f}} & p
\tril (h \circ (g \circ f))}\] and the component of the
associativity $\alpha_{h,g,f} \maps (h \circ g) \circ f \to h \circ
(g \circ f)$, defines a 2-morphism
\[\xymatrix@!=4pc{s \rtwocell<8>^{((\psi \circ \phi) \circ \varphi,
(h \circ g) \circ f)}_{(\psi \circ (\phi \circ \varphi), h \circ (g
\circ f))}{\,\,\,\,\,\,\,\,\,\,\,\,\,\,\alpha_{h,g,f}} & p}\] which
we see from the commutativity of the diagram
\[\xymatrix{s \ar@{=}[dd] \ar[rr]^-{\varphi} && r \tril f
\ar[rr]^-{(\psi \circ \phi) \tril f} \ar[d]^-{\phi \tril f} && (p
\tril (h \circ g)) \tril f \ar[rr]^-{\kappa_{p,h \circ g,f}} &&
p \tril ((h \circ g) \circ f) \ar[dd]^-{p \tril \alpha_{h,g,f}}\\
&& (q \tril g) \tril f \ar[rr]^-{(\psi \tril g) \tril f}
\ar[d]^-{\kappa_{p,h,g}} &&
((p \tril h) \tril g) \tril f \ar[u]_-{\kappa_{p,h,g} \tril f} \ar[d]^-{\kappa_{p \tril h,g,f}} && \\
s \ar[rr]_-{\phi \circ \varphi} && q \tril (g \circ f)
\ar[rr]_-{\psi \tril (g \circ f)} && (p \tril h) \tril (g \circ
f)\ar[rr]_-{\kappa_{p,h,g \circ f}} && p \tril (h \circ (g \circ
f))}\] that follows from the definition of the horizontal
composition, the naturality and the coherence for quasiassociativity
of the action. The horizontal composition of 2-morphisms
\[\xymatrix@!=4pc{r \rtwocell<8>^{(\phi,g)}_{(\theta,k)}{\pi} & q \rtwocell<8>^{(\psi,h)}_{(\xi,l)}{\rho} & p}\]
is given by the horizontal composition in $B_{2}$
\[\xymatrix@!=4pc{r \rtwocell<8>^{(\psi \circ \phi,h \circ g)}_{(\xi \circ \theta,l \circ k)}{\,\,\,\,\,\,\,
\rho \circ \pi} & p}\] since we have a commutative diagram
\[\xymatrix{r \ar[dr]_-{\theta} \ar[rr]^-{\phi} && q \tril g \ar[rr]^-{\psi \tril g}
\ar[dr]_-{\xi \tril g} \ar[dl]^-{q \tril \pi} && (p \tril h) \tril g
\ar[dd]^-{(p \tril \rho) \tril \pi} \ar[dl]^-{(p \tril \rho) \tril
g} \ar[rr]^-{\kappa_{p,h,g}} &&
p \tril (h \circ g) \ar[dd]^-{p \tril (\rho \circ \pi)}\\
& q \tril k \ar[dr]_-{\xi \tril k} && (p \tril l) \tril g
\ar[dr]^-{(p \tril l) \tril \pi} \ar[dl]_-{(p \tril l) \tril \pi} &&& \\
&& (p \tril l) \tril k \ar@{=}[rr] && (p \tril l) \tril k
\ar[rr]_-{\kappa_{p,l,k}} && p \tril (l \circ k)}\] which follows
from the interchange law and the naturality of the coherence for the
quasiassociativity of the action. The vertical composition of
2-morphisms in $\Pe \tril \B$ is similarly induced from the one in
$\B$. The coherence of the horizontal composition in $\Pe \tril \B$
is immediately given by the coherence of the horizontal composition
in $\B$.
\end{proof}
\end{thm}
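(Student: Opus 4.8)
The plan is to supply the data listed for $\Pe \tril \B$ with the remaining structure of a bicategory --- horizontal and vertical composition of $2$-cells, identity $1$-morphisms, and the associativity and unit constraints --- and to verify the pentagon and triangle coherence axioms, in each case reducing the verification to the corresponding data for $\B$ together with the coherence axioms of the action (Definition 5.1).

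First I would define horizontal composition of $1$-morphisms. Given $(\phi,g) \maps r \to q$ and $(\psi,h) \maps q \to p$, set $(\psi,h) \circ (\phi,g) := (\psi \circ \phi,\ h \circ g)$, where $\psi \circ \phi \maps r \to p \tril (h \circ g)$ is the composite $r \xrightarrow{\phi} q \tril g \xrightarrow{\psi \tril g} (p \tril h) \tril g \xrightarrow{\kappa_{p,h,g}} p \tril (h \circ g)$ in $\Pe$. The identity $1$-morphism on an object $p$ is the pair $(\iota_p^{-1}, i_{\Lambda_0(p)})$, using the invertible component $\iota_p \maps p \tril i_{\Lambda_0(p)} \to p$. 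Vertical composition of $2$-cells $\gamma \maps (\psi,h) \Rightarrow (\xi,l)$ and $\delta \maps (\xi,l) \Rightarrow (\zeta,m)$ is the vertical composite $\delta\gamma \maps h \Rightarrow m$ in $B_2$; its defining square $(p \tril (\delta\gamma)) \circ \psi = \zeta$ holds because $A$ is a functor, so $p \tril (\delta\gamma) = (p \tril \delta)(p \tril \gamma)$ and one composes the two given squares. Horizontal composition of $2$-cells is the horizontal composite in $B_2$, and its defining square is precisely the commuting diagram exhibited in the proof, which combines the interchange law in $\Pe \times_{\B_0} \B_1$ with the naturality of $\kappa$.

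Next I would produce the constraint cells. The associator at a triple of composable $1$-morphisms has as its underlying $2$-cell in $B_2$ the component $\alpha_{h,g,f} \maps (h \circ g) \circ f \Rightarrow h \circ (g \circ f)$ of the associator of $\B$; the substantive point is that this does define a $2$-morphism of $\Pe \tril \B$, i.e.\ that the compatibility square relating the two threefold composites $(\psi \circ \phi) \circ \varphi$ and $\psi \circ (\phi \circ \varphi)$ commutes. That is the large diagram displayed in the proof, which I would verify by pasting together the definition of horizontal composition, naturality of $\kappa$ in its first two arguments, and the coherence axiom for the quasiassociativity of the action --- the one relating $\kappa_{p,h,g} \tril f$, $\kappa_{p \tril h,g,f}$, $\kappa_{p, h \circ g, f}$, $\kappa_{p,h,g \circ f}$ and $p \tril \alpha_{h,g,f}$. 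The left and right unit constraints of $\Pe \tril \B$ are induced from $\lambda$ and $\rho$ of $\B$, and the check that they give $2$-morphisms of $\Pe \tril \B$ uses exactly the two triangle axioms of Definition 5.1 linking $\iota$ and $\kappa$ to $\lambda$ and $\rho$.

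Finally I would verify the bicategory axioms. Because the underlying $B_2$-cell of every constraint of $\Pe \tril \B$ is the corresponding constraint of $\B$, and a $2$-morphism of $\Pe \tril \B$ is determined by its underlying $B_2$-cell (the remaining data being only a commuting condition), the pentagon identity in $\Pe \tril \B$ at $(k,h,g,f)$ follows from the pentagon in $\B$, and likewise the triangle identity from the triangle in $\B$; functoriality of horizontal composition (the interchange law for $\Pe \tril \B$) follows from functoriality of $A$ and of horizontal composition in $\B$. I expect the main obstacle to be the well-definedness of the associator --- the large diagram in the proof --- since that is the single place where the genuinely new action axioms, rather than mere functoriality or naturality, are indispensable; everything else is a routine transfer of the corresponding facts about $\B$.
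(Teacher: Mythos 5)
Your proposal is correct and follows essentially the same route as the paper: the same composition formula for $1$-morphisms, the same associator built from $\alpha_{h,g,f}$ whose well-definedness is the one substantive diagram chase (via naturality of $\kappa$ and the pentagon-type action axiom), the same treatment of horizontal composition of $2$-cells via interchange, and the same reduction of the coherence axioms to those of $\B$ using the fact that a $2$-morphism of $\Pe \tril \B$ is determined by its underlying cell in $B_{2}$. You are in fact slightly more complete than the paper, which leaves the identity $1$-morphisms $(\iota_{p}^{-1}, i_{\Lambda_{0}(p)})$, the unit constraints, and the explicit form of vertical composition implicit.
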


\begin{prop} There exists a canonical projection
\begin{equation}\label{homoproject}
\Lambda \maps \Pe \tril \B \to \B
\end{equation}
which is a strict homomorphism of bicategories.
\begin{proof} A homomorphism $\Lambda \maps \Pe \tril \B \to \B$ is defined by (the component of)
the momentum functor $\Lambda_{0}(p)=\lambda_{0}(p)$, for any object
$p$ in $\Pe \tril \B$. For any 1-morphism $(\psi,h)$ it is defined
by $\Lambda_{1}(\psi,h)=h$, and for any 2-morphism $\gamma \maps
(\psi,h) \Rightarrow (\xi,l)$ in $\Pe \tril \B$, it is given simply
by $\Lambda_{2}(\gamma)=\gamma$. Then we have a following identity
\[\Lambda((\psi,h) \circ (\phi,g))=\Lambda(\psi \circ \phi,h \circ
g)=h \circ g=\Lambda(\psi,h) \circ \Lambda(\phi,g)\] which means
that this homomorphism is strict (it preserves a composition
strictly).
\end{proof}
\end{prop}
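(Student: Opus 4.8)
The plan is to define $\Lambda \maps \Pe \tril \B \to \B$ as the assignment that simply forgets the $\Pe$-component of each cell, and then to check that this forgetful assignment respects the entire bicategorical structure \emph{on the nose}. Explicitly, on an object $p$ of $\Pe \tril \B$ set $\Lambda(p):=\Lambda_{0}(p)$ (the value of the momentum functor), on a $1$-morphism $(\psi,h)\maps q \to p$ set $\Lambda(\psi,h):=h$, and on a $2$-morphism $\gamma \maps (\psi,h)\Rightarrow(\xi,l)$ set $\Lambda(\gamma):=\gamma$, which makes sense because by definition such a $\gamma$ \emph{is} a $2$-cell $h\Rightarrow l$ in $B_{2}$ (subject to a commuting square in $\Pe$ that $\Lambda$ discards).

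First I would verify that, for each ordered pair of objects $q,p$, the rule $(\psi,h)\mapsto h$, $\gamma\mapsto\gamma$ is a functor from the hom-category $(\Pe\tril\B)(q,p)$ to the hom-category $\B(\Lambda_{0}(q),\Lambda_{0}(p))$. Since the vertical composition of $2$-morphisms in $\Pe\tril\B$ and the hom-category identities were, in Theorem 6.1, \emph{defined} to be induced from the corresponding data in $\B$, this functoriality is automatic; there is nothing to compute.

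Next I would establish strict preservation of horizontal composition and of identity $1$-cells. For horizontal composition this is forced by the definition $(\psi,h)\circ(\phi,g)=(\psi\circ\phi,\,h\circ g)$, so that $\Lambda\big((\psi,h)\circ(\phi,g)\big)=h\circ g=\Lambda(\psi,h)\circ\Lambda(\phi,g)$, and likewise the horizontal composite $\rho\circ\pi$ of $2$-morphisms was defined via the horizontal composite in $B_{2}$, so $\Lambda$ commutes with it. For units one identifies the identity $1$-morphism on $p$ as $(\iota_{p}^{-1},\,i_{\Lambda_{0}(p)})$, its $\Pe$-component being the inverse of the unit isomorphism $\iota$ of the action; then $\Lambda(\mathrm{id}_{p})=i_{\Lambda_{0}(p)}=\mathrm{id}_{\Lambda_{0}(p)}$. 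Finally, $\Lambda$ must send the associativity and unit constraints of $\Pe\tril\B$ to those of $\B$: by the construction in the proof of Theorem 6.1 the associator of $\Pe\tril\B$ on a composable triple has underlying $2$-cell $\alpha_{h,g,f}$, and the left/right unitors are built from $\lambda,\rho$ of $\B$, and since $\Lambda$ acts as the identity on underlying $2$-cells these constraints are transported identically. Because all comparison $2$-cells (for composition and for units) are thus identities, the homomorphism coherence axioms for $\Lambda$ reduce to the already-verified equalities, and $\Lambda$ is a \emph{strict} homomorphism.

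The only real obstacle is bookkeeping, not mathematics: the construction of $\Pe\tril\B$ in Theorem 6.1 spells out composition and the associator but leaves the identity $1$-cells and the unit constraints somewhat implicit, so one must first write these down (together with the coherence squares in $\Pe$ they carry) before the verification above is literally meaningful. Once that is done, strictness of $\Lambda$ is an immediate consequence of the design of $\Pe\tril\B$: every structural datum of $\Pe\tril\B$ is, by construction, a $\Pe$-decorated copy of the corresponding datum of $\B$, and $\Lambda$ is precisely the operation of stripping off that decoration.
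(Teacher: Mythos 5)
Your proposal is correct and follows essentially the same route as the paper: define $\Lambda$ by forgetting the $\Pe$-component on objects, $1$-cells and $2$-cells, and observe that every structural datum of $\Pe \tril \B$ was built as a $\Pe$-decoration of the corresponding datum of $\B$, so composition and constraints are preserved on the nose. You are in fact more thorough than the paper, which only records the computation $\Lambda((\psi,h)\circ(\phi,g))=h\circ g$; your explicit treatment of the identity $1$-cell $(\iota_{p}^{-1},i_{\Lambda_{0}(p)})$ and of the associator and unitors fills in details the paper leaves implicit.
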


\begin{ex} The right action of a bicategory $\B$ on itself is given by a diagram
\[\xymatrix@!=3pc{B_{2} \ar@<1ex>[d]^-{s_{1}}
\ar@<-1ex>[d]_{t_{1}} & B_{2} \ar@<1ex>[d]^{s_{1}}
\ar@<-1ex>[d]_{t_{1}}\\ B_{1} \ar[dr]_{s_{0}} & B_{1}
\ar@<1ex>[d]^{s_{0}} \ar@<-1ex>[d]_{t_{0}}\\
& B_{0}}\] where a momentum functor is given by the source $S \maps
\B_{1} \to \B_{0}$ and an action functor is given by a horizontal
composition $H \maps \B_{1} \times_{\B_{0}} \B_{1} \to \B_{1}$. Any
object of an action bicategory $\B_{1} \tril \B$ is an element of
$B_{1}$, which which is a 1-morphism
\[\xymatrix{x \ar[r]^-{f} & y.}\] A 1-morphism from an object $f$ to
an object $f'$ is a pair $(\phi,g) \maps f \to f'$ as in the diagram
\[\xymatrix{x \ar[dd]^-{g} \ar@/^1pc/[drr]^{f} && \\
\rrtwocell<\omit>{\phi} && y\\
z \ar@/_1pc/[urr]_{f'} && }\] where $\phi \maps f \Rightarrow f'
\circ g$ is a 2-morphism in $\B$. A 2-morphism $\gamma \maps
(\phi,g) \Rightarrow (\psi,h)$ is a diagram
\[\xymatrix{x \ar@/_1.2pc/[dd]_{g} \ar@{.>}@/^1.2pc/[dd]^{h}
\ar@/^1pc/[drr]^{f}_{\ }="s" && \\
&& y\\
z \ar@/_1pc/[urr]_{f'}^{\ }="t" \uutwocell<\omit>{\gamma} &&
\ar@{=>}@/_.7pc/"s";"t"^{\phi} |>\hole
\ar@{:>}@/^.7pc/"s";"t"^{\psi} |>\hole}\] where $\gamma \maps g
\Rightarrow h$ is a 2-morphism in $\B$ such that identity $\psi=(f'
\circ \gamma)\phi$ holds. We will denote an action bicategory
$\B_{1} \tril \B$ by $T\B$, and we call it a {\it tangent
bicategory} because the 2-bundle
\begin{equation}\label{tang2bun}
T \maps T\B \to B_{0}
\end{equation}
(which associates to all above diagrams an object $y$) is a
generalization of a {\it tangent 2-bundle} introduced by Roberts and
Schreiber in \cite{RS} in the case of strict 2-categories. This
example of an action bicategory plays a crucial role in
understanding of universal 2-bundles.
\end{ex}

\section{Bigroupoid 2-torsors}

\begin{defi}
A right action of a bigroupoid $\B$ on a groupoid $\Pe$ is given by
the action of the underlying bicategory $\B$ on a category $\Pe$
given as previously by $(\Pe,\B,\Lambda,A,\alpha,\iota)$.
\end{defi}

\begin{defi} Let $\B$ be an internal bigroupoid in $\E$, and $\pi \maps \Pe \to X$
a right $\B$-2-bundle of groupoids over $X$ in $\E$. We say that
$(\Pe,\pi,\Lambda,A,X)$ is a right $\B$-principal-2-bundle (or a
right $\B$-torsor) over $X$ if the following conditions are
satisfied:
\begin{itemize}
\item the projection morphism $\pi_{0} \maps P_{0} \to X$
is an epimorphism,

\item the action morphism $\lambda_{0} \maps P_{0} \to
B_{0}$ is an epimorphism,

\item the induced internal functor
\begin{equation} \label{indeq}
\xymatrix{(Pr_{1},A) \maps \Pe \times_{B_{0}} \B_{1} \to \Pe
\times_{X} \Pe}
\end{equation}
is a (strong) equivalence of internal groupoids over $\Pe$ (where
both groupoids are seen as objects over $\Pe$ by the first
projection functor).

\end{itemize}
\end{defi}

\begin{ex} (The trivial 2-torsor) The trivial 2-torsor is given by the triple
$(\B_{1},T,S,\Ha,B_{0})$ where the momentum is given by the source
functor $S \maps \B_{1} \to \B_{0}$, and the action is given by the
horizontal composition $H \maps \B_{1} \times_{\B_{0}} \B_{1} \to
\B_{1}$.
\end{ex}

\begin{ex} For any $\B$-2-torsor $(\Pe,\pi,\Lambda,A,X)$ over $X$, and any morphism $f
\maps M \to B_{0}$, we have a pullback $\B$-2-torsor over $M$,
defined by $(f^{\ast}(\Pe),Pr_{1},\Lambda \circ
Pr_{2},f^{\ast}(A),X)$.
\end{ex}

\noindent Let us describe the simplicial set $\Pe_{\bullet}$ arising
by the application of the Duskin nerve functor
\[N_{2} \maps Bicat \to \Ss Set\]
to the action bicategory $\Pe \tril \B$. The set of 0-simplices is
$P_{0}$ and any 1-simplex is an arrow
\[\xymatrix@!=4pc{p_{j} \ar[r]^-{(\pi_{ij},f_{ij})} & p_{i}}\] and face operators are defined by
$d^{1}_{0}(\pi_{ij},f_{ij})=p_{i}$ and
$d^{1}_{1}(\pi_{ij},f_{ij})=p_{j}$, while the degeneracy is defined
by $s^{1}_{0}(p_{i})=(\iota_{p_{i}},i_{p_{i}})$ and it is given by
the arrow
\[\xymatrix@!=4pc{p_{i} \ar[r]^-{(\iota_{p_{i}},i_{p_{i}})} & p_{i}}\] where the morphism
$\iota_{p_{i}} \maps p_{i} \to p_{i} \tril i_{\Lambda_{0}(p_{i})}$
is an identity coherence of the action. A 2-simplex in
$\Pe_{\bullet}$ is of the form
\[\xymatrix@!=4pc{p_{k} \drtwocell<\omit>{<-4>\,\,\,\,\,\,\,\beta_{ijk}}
\ar[r]^-{(\pi_{jk},f_{jk})} \ar[dr]_-{(\pi_{ik},f_{ik})} & p_{j} \ar[d]^-{(\pi_{ij},f_{ij})}\\
& p_{i}}\] where the diagram
\[\xymatrix@!=4pc{p_{k} \ar[r]^-{\pi_{ij} \circ \pi_{jk}} \ar[dr]_-{\pi_{ik}} & p_{i} \tril (f_{ij} \circ f_{jk})
\ar[d]^-{p \tril \beta_{ijk}}\\ & p_{i} \tril f_{ik}}\] of morphisms
in $\Pe$ commutes, and the morphism $\pi_{ij} \circ \pi_{jk} \maps
p_{k} \to p_{i} \tril (f_{ij} \circ f_{jk})$ is the composite of
\[\xymatrix{p_{k} \ar[r]^-{\pi_{jk}} & p_{j} \tril f_{jk} \ar[r]^-{\pi_{ij} \tril f_{jk}} &
(p_{i} \tril f_{ij}) \tril f_{jk} \ar[r]^-{\kappa_{i,j,k}} & p_{i}
\tril (f_{ij} \circ f_{jk})}\] of morphisms in $\Pe$. Face operators
are defined by
\[\begin{array}{c}d^{2}_{0}(\beta_{ijk})=(\pi_{jk},f_{jk})\\
d^{2}_{1}(\beta_{ijk})=(\pi_{ik},f_{ik})\\
d^{2}_{2}(\beta_{ijk})=(\pi_{ij},f_{ij}) \end{array}\] and the
degeneracy operators are given by
\[\begin{array}{c} s^{2}_{0}(\pi_{ij},f_{ij})=\rho_{f_{ij}}\\
s^{2}_{1}(\pi_{ij},f_{ij})=\lambda_{f_{ij}}
\end{array}\] which are the two 2-simplices
\[\xymatrix@!=4pc{p_{j} \drtwocell<\omit>{<-4>\,\,\,\,\,\,\,\rho_{f_{ij}}}
\ar[r]^-{(\iota_{p_{j}},i_{p_{j}})} \ar[dr]_-{(\pi_{ij},f_{ij})} & p_{j} \ar[d]^-{(\pi_{ij},f_{ij})}\\
& p_{i}} \hspace{3pc} \xymatrix@!=4pc{p_{j}
\drtwocell<\omit>{<-4>\,\,\,\,\,\,\,\lambda_{f_{ij}}}
\ar[r]^-{(\pi_{ij},f_{ij})} \ar[dr]_-{(\pi_{ij},f_{ij})} & p_{i} \ar[d]^-{(\iota_{p_{i}},i_{p_{i}})}\\
& p_{i}}\] respectively, where the 1-morphisms $\rho_{f_{ij}} \maps
f_{ij} \circ i_{p_{j}} \to f_{ij}$ and $\lambda_{f_{ij}} \maps
i_{p_{i}} \circ f_{ij} \to f_{ij}$ are the components of the right
and left identity natural isomorphisms in $\B$. \\A general
3-simplex is of the form
\[\xymatrix@!=4pc{& p_{i}  \drtwocell<\omit>{<4>\beta_{ijk}}  \dltwocell<\omit>{<-4>\beta_{ijl}} &\\
p_{l} \drtwocell<\omit>{<-4>\beta_{ikl}} \ar[ur]^{(f_{il},\pi_{il})}
\ar'[r][rr]^{(f_{jl},\pi_{jl})} \ar[dr]_-{(f_{kl},\pi_{kl})}  &&
p_{j} \dltwocell<\omit>{<4>\beta_{jkl}} \ar[ul]_-{(f_{ij},\pi_{ij})} \\
& p_{k} \ar[uu]_(0.45){(f_{ik},\pi_{ik})}
\ar[ur]_-{(f_{jk},\pi_{jk})} &}\] where we have an identity
\[\beta_{ikl}(\beta_{ijk} \circ f_{kl})= \alpha_{ijkl}\beta_{ijl}(\beta_{jkl} \circ f_{ij})\]
which is just a nonabelian 2-cocycle condition.

\begin{ex} Let $B_{\bullet}$ be Duskin nerve for a bicategory $\B$.
The tangent bicategory $T\B$ from Example 6.1. is action bicategory
for the right action of $\B$ on itself and a d\'ecalage construction
(\ref{Decalage}) from Chapter 2 becomes the diagram of simplicial
sets
\[\xymatrix{B_{0} \ar@{=}[rr] \ar@<0.5ex>[d]^{s_{0}} && B_{0} \ar@{=}[rr] \ar@<0.5ex>[d]^{s^{2}_{0}}
&& B_{0} \ar@{=}[rr] \ar@<0.5ex>[d]^{s^{3}_{0}} &&
B_{0} \ar@<0.5ex>[d]^{s^{4}_{0}}&...& Sk^{0}(B_{\bullet}) \ar@<0.5ex>[d]^{S_{0}}\\
B_{1} \ar@<0.5ex>[u]^{d_{0}} \ar[rr] \ar@<0.5ex>[d]^{d_{1}} && B_{2}
\ar@<0.5ex>[u]^{d^{2}_{0}} \ar@<-0.5ex>[ll]_{d_{1}}
\ar@<0.5ex>[ll]^{d_{0}} \ar@<-0.5ex>[rr] \ar@<0.5ex>[rr]
\ar@<0.5ex>[d]^{d_{2}} && B_{3} \ar@<0.5ex>[u]^{d^{3}_{0}} \ar[ll]
\ar@<-1ex>[ll]_{d_{2}} \ar@<1ex>[ll]^{d_{0}} \ar@<0.5ex>[d]^{d_{3}}
\ar[rr] \ar@<-1ex>[rr] \ar@<1ex>[rr] && B_{4}
\ar@<0.5ex>[u]^{d^{4}_{0}} \ar@<0.5ex>[d]^{d_{4}} \ar@<-0.5ex>[ll]
\ar@<0.5ex>[ll] \ar@<-1.5ex>[ll] \ar@<1.5ex>[ll]
&...& Dec(B_{\bullet}) \ar@<0.5ex>[d]^{D_{1}} \ar@<0.5ex>[u]^{D_{0}} \\
B_{0} \ar@<0.5ex>[u]^{s_{0}} \ar[rr]  && B_{1}
\ar@<0.5ex>[u]^{s_{1}} \ar@<-0.5ex>[ll]_{d_{1}}
\ar@<0.5ex>[ll]^{d_{0}} \ar@<-0.5ex>[rr] \ar@<0.5ex>[rr] && B_{2}
\ar@<0.5ex>[u]^{s_{2}} \ar[ll] \ar@<-1ex>[ll]_{d_{2}}
\ar@<1ex>[ll]^{d_{0}} \ar[rr] \ar@<-1ex>[rr] \ar@<1ex>[rr] && B_{3}
\ar@<0.5ex>[u]^{s_{3}} \ar@<-0.5ex>[ll] \ar@<0.5ex>[ll]
\ar@<-1.5ex>[ll] \ar@<1.5ex>[ll] &...& B_{\bullet}
\ar@<0.5ex>[u]^{S_{1}}}\] in which $D_{1} \maps Dec(B_{\bullet}) \to
B_{\bullet}$ is a simplicial map which is the Duskin nerve of the
canonical projection $\Lambda \maps T\B \to \B$ and $D_{0} \maps
Dec(B_{\bullet}) \to B_{\bullet}$ is a simplicial map which is the
Duskin nerve of the tangent 2-bundle $T \maps T\B \to B_{0}$
\end{ex}

\begin{thm} Let the bigroupoid $\B$ acts on a groupoid $\Pe$.
Then the Duskin nerve of the canonical projection
(\ref{homoproject}) is a simplicial map
$\Lambda_{\bullet}=\N_{2}(\Lambda) \maps \Pe_{\bullet} \to
\B_{\bullet}$ which is a simplicial action of the Duskin nerve
$B_{\bullet}$ on the bigroupoid $\B$, i.e. it is an exact fibration
for all $n \geq 2$.
\begin{proof} We need to show that for any $n \geq 2$ and for any $k$ such that $0 \leq k \leq n$, the diagram
\[\xymatrix@!=4pc{P_{n} \ar[d]_{p_{\bar{k}}} \ar[r]^-{\lambda_{n}} & B_{n} \ar[d]^{p_{\bar{k}}} \\
\bigwedge^{k}_{n}(\Pe_{\bullet}) \ar[r]_{\lambda^{k}_{n}} &
\bigwedge^{k}_{n}(\B_{\bullet})}\] is a pullback. A k-horn
$((f_{ij},\pi_{ij}),...,(f_{j,k-1},\pi_{j,k-1}),(f_{k,k+1},\pi_{k,k+1}),...,(f_{n-1,n},\pi_{n-1,n}))$
in $\bigwedge^{k}_{n}(\Pe_{\bullet})$ is given by the n-tuple of
1-morphisms in $\A_{\B}\Pe$, and its image by $\lambda^{k}_{2} \maps
\bigwedge^{k}_{2}(\Pe_{\bullet})  \to
\bigwedge^{k}_{2}(\Pe_{\bullet})$ is a k-horn in
$\bigwedge^{k}_{n}(\B_{\bullet})$, given by the n-tuple
$(f_{ij},...,f_{j,k-1},f_{k,k+1},...,f_{n-1,n})$ of 1-morphisms in
$\B$. For example, in the case $n=2$, any filler of a 1-horn
$(f_{ij},-,f_{jk})$ in $\bigwedge^{1}_{2}(\B_{\bullet})$, is the
2-simplex
\[\xymatrix@!=4pc{x_{k} \drtwocell<\omit>{<-4>\,\,\,\,\,\,\,\beta_{ijk}}
\ar[r]^-{f_{jk}} \ar[dr]_-{f_{ik}} & x_{j} \ar[d]^-{f_{ij}}\\&
x_{i}}\] in $B_{2}$. A 2-simplex in $\Pe_{\bullet}$ is a lifting of
the previous 2-simplex if it is of the form
\[\xymatrix@!=4pc{p_{k} \drtwocell<\omit>{<-4>\,\,\,\,\,\,\,\beta_{ijk}}
\ar[r]^-{(\pi_{jk},f_{jk})} \ar[dr]_-{(\pi_{ik},f_{ik})} & p_{j} \ar[d]^-{(\pi_{ij},f_{ij})}\\
& p_{i}}\] where the diagram
\[\xymatrix@!=4pc{p_{k} \ar[r]^-{\pi_{ij} \circ \pi_{jk}} \ar[dr]_-{\pi_{ik}} & p_{i} \tril (f_{ij} \circ f_{jk})
\ar[d]^-{p \tril \beta_{ijk}}\\ & p_{i} \tril f_{ik}}\] of morphisms
in $\Pe$ commutes, and the morphism $\pi_{ij} \circ \pi_{jk} \maps
p_{k} \to p_{i} \tril (f_{ij} \circ f_{jk})$ is the composite of
\[\xymatrix{p_{k} \ar[r]^-{\pi_{jk}} & p_{j} \tril f_{jk} \ar[r]^-{\pi_{ij} \tril f_{jk}} &
(p_{i} \tril f_{ij}) \tril f_{jk} \ar[r]^-{\kappa_{i,j,k}} & p_{i}
\tril (f_{ij} \circ f_{jk})}\] so we see that a pair
$((f_{ij},\pi_{ij}),-,(f_{jk},\pi_{jk}),\beta_{ijk})$ in
$\bigwedge^{1}_{2}(\Pe_{\bullet})
\times_{\bigwedge^{1}_{2}(\B_{\bullet})} B_{2}$ uniquely determines
above 2-simplex in $\Pe_{2}$. Since $\Pe$ is a groupoid, any pair
consisting of a k-horn in $\bigwedge^{k}_{2}(\B_{\bullet})$, for
$k=0,2$, and a 2-simplex in $\B_{2}$ which covers the k-horn,
uniquely determines a 2-simplex in $\Pe_{2}$, and thus provides a
canonical isomorphism $P_{2} \simeq \bigwedge^{k}_{2}(\Pe_{\bullet})
\times_{\bigwedge^{k}_{2}(\B_{\bullet})} B_{2}$. Since both
simplicial objects are 2-coskeletal, the assertion follows for all
$n \geq 2$.
\end{proof}
\end{thm}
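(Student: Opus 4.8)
\emph{Proof proposal.} Unwinding the definition of an exact fibration (Definition 2.11), what must be shown is that for every $n\geq 2$ and every $0\leq k\leq n$ the canonical comparison morphism
\[(p_{\bar{k}},\lambda_{n})\maps P_{n}\longrightarrow \bigwedge^{k}_{n}(\Pe_{\bullet})\times_{\bigwedge^{k}_{n}(\B_{\bullet})}B_{n}\]
is an isomorphism; once this is done, the assertion that $\Lambda_{\bullet}$ is a simplicial action is exactly Definition 2.13. The plan is to construct the inverse of this morphism by hand in dimension $2$ — splitting into the three horn positions — and then to propagate to $n\geq 3$ using that both nerves are coskeletal. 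As in the proof of Proposition 2.1, the whole argument is element-theoretic, so after applying the Barr embedding I may reason with generalized elements as if in $\Set$, the naturality of the constructions then giving the statement internally in $\E$. Throughout I use that a $2$-simplex of $\Pe_{\bullet}=N_{2}(\Pe\tril\B)$ is a triple of $1$-morphisms $(\pi_{ij},f_{ij})\maps p_{j}\to p_{i}$, $(\pi_{jk},f_{jk})\maps p_{k}\to p_{j}$, $(\pi_{ik},f_{ik})\maps p_{k}\to p_{i}$ of $\Pe\tril\B$ together with a $2$-cell $\beta_{ijk}\maps f_{ij}\circ f_{jk}\Rightarrow f_{ik}$ of $\B$ satisfying $(p_{i}\tril\beta_{ijk})\circ\kappa_{p_{i},f_{ij},f_{jk}}\circ(\pi_{ij}\tril f_{jk})\circ\pi_{jk}=\pi_{ik}$ in $\Pe$, and that $N_{2}(\Lambda)$ records precisely $(f_{ij},f_{jk},f_{ik},\beta_{ijk})$.

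In dimension $2$ a point of the fibre product consists of a $k$-horn of $\Pe_{\bullet}$ together with a $2$-simplex of $\B_{\bullet}$ covering its projection, and I must produce a unique $2$-simplex of $\Pe_{\bullet}$ lying over both. For $k=1$ the horn supplies the faces $(\pi_{jk},f_{jk})$ and $(\pi_{ij},f_{ij})$ while the $\B$-simplex supplies $f_{ik}$ and $\beta_{ijk}$, and the triangle identity then simply defines the missing face by $\pi_{ik}:=(p_{i}\tril\beta_{ijk})\circ\kappa_{p_{i},f_{ij},f_{jk}}\circ(\pi_{ij}\tril f_{jk})\circ\pi_{jk}$, an honest composite; existence and uniqueness are immediate and no invertibility is used. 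For $k=0$ the data are $(\pi_{ik},f_{ik})$, $(\pi_{ij},f_{ij})$, $f_{jk}$ and $\beta_{ijk}$, and one must solve the same identity for the rightmost factor $\pi_{jk}$; here $\pi_{ij}\tril f_{jk}$ is invertible because $\Pe$ is a groupoid and $A$ a functor, $\kappa$ is a natural isomorphism, and $p_{i}\tril\beta_{ijk}$ is invertible because $\B$ is a bigroupoid, so $\pi_{jk}$ is obtained by left-cancellation and is unique. The genuinely delicate case is $k=2$, where the unknown $\pi_{ij}$ sits \emph{inside} the factor $\pi_{ij}\tril f_{jk}$: cancelling the invertible outer factors only yields
\[\pi_{ij}\tril f_{jk}=\kappa_{p_{i},f_{ij},f_{jk}}^{-1}\circ(p_{i}\tril\beta_{ijk})^{-1}\circ\pi_{ik}\circ\pi_{jk}^{-1},\]
and to recover $\pi_{ij}$ itself I would use the bigroupoid hypothesis a second time. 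Picking a pseudo-inverse $\bar{f}_{jk}$ of $f_{jk}$ in $\B$ and composing $\kappa$ with $\iota$ produces a natural isomorphism $(-\tril f_{jk})\tril\bar{f}_{jk}\cong\mathrm{id}$, and symmetrically on the other side, so $-\tril f_{jk}$ is an equivalence of the relevant fibre groupoid of $\Pe$, in particular fully faithful; hence there is a unique $\pi_{ij}\maps p_{j}\to p_{i}\tril f_{ij}$ with $\pi_{ij}\tril f_{jk}$ equal to the right-hand side. In each of the three cases one checks routinely that the reconstructed simplex maps under $N_{2}(\Lambda)$ to the prescribed $\B$-simplex, which furnishes the inverse and hence the pullback square for $n=2$.

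For $n\geq 3$ I would invoke that, being Duskin nerves of bicategories, both $\Pe_{\bullet}$ and $\B_{\bullet}$ are $3$-coskeletal (cf. \cite{Du5}). In dimension $3$ one further direct check is needed, but it is only a diagram chase: whichever face of the tetrahedron is missing, the three surviving faces already pin down all six $1$-morphisms and the covering $\B$-$3$-simplex supplies the fourth $2$-cell, so the only things to verify are the one triangle identity of $\Pe\tril\B$ belonging to the reconstructed face — which follows from the three surviving triangle identities, the $\B$-cocycle, naturality of $\kappa$ and the coherence axioms of the action, using invertibility in $\Pe$ and in $\B$ wherever the reconstruction is not ``outermost'' — and the $\Pe\tril\B$-cocycle of the whole $3$-simplex, which coincides with the given $\B$-cocycle because $2$-cells of $\Pe\tril\B$ are $2$-cells of $\B$ and the associator of $\Pe\tril\B$ is that of $\B$. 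Finally, for $n$ strictly above the coskeletal range $P_{n}$, $B_{n}$, $\bigwedge^{k}_{n}(\Pe_{\bullet})$ and $\bigwedge^{k}_{n}(\B_{\bullet})$ are all computed as the same iterated simplicial kernels of the truncated data in dimensions $\leq 3$, and $\Lambda_{\bullet}$ is the induced morphism; since pullbacks commute with limits, exactness in dimensions $2$ and $3$ propagates to all $n\geq 2$ by the standard propagation argument for exact fibrations between coskeletal objects (cf. \cite{Gl}).

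The hard part will be the case $k=2$ in dimension $2$. For $k=0$ and $k=1$ the missing component of the action morphism lies at the outside of the defining composite and is recovered by ordinary cancellation, needing only that $\Pe$ is a groupoid and that $\B$ is one (so that $\beta_{ijk}$ is invertible); but for $k=2$ one has to invert the operation $-\tril f$ itself, which is precisely where the assumption that \emph{every $1$-morphism of $\B$ is an internal equivalence} — i.e. that $\B$ is a bigroupoid, not merely a bicategory — enters in an essential way, and it is also the structural reason the statement would fail for a bare bicategory acting on a bare category. Everything else, including the dimension-$3$ chase and the coskeletal bootstrap, is bookkeeping layered on top of this point.
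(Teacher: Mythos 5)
Your proposal is correct, and its skeleton --- reduce everything to the comparison map $P_{n}\to\bigwedge^{k}_{n}(\Pe_{\bullet})\times_{\bigwedge^{k}_{n}(\B_{\bullet})}B_{n}$, construct its inverse in dimension $2$ by a case analysis on the horn position, and then propagate upward --- is the same as the paper's. The genuine difference is in how much is actually justified. In dimension $2$ the paper settles $k=0$ and $k=2$ with the single phrase ``since $\Pe$ is a groupoid,'' whereas you correctly isolate the two distinct places where the bigroupoid hypothesis on $\B$ itself is indispensable: for $k=0$ the factor $p_{i}\tril\beta_{ijk}$ must be inverted (invertibility of $2$-cells of $\B$), and for $k=2$ the unknown $\pi_{ij}$ sits inside $\pi_{ij}\tril f_{jk}$, so one must know that $-\tril f_{jk}$ is fully faithful on the fibres, which you derive from a pseudo-inverse of $f_{jk}$ together with $\kappa$ and $\iota$. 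That last point is the real content of the theorem and the paper leaves it implicit; your version supplies it. Where you diverge is the propagation to $n\geq 3$: you invoke $3$-coskeletality of Duskin nerves, a dimension-$3$ diagram chase, and Glenn's propagation lemma, while the paper appeals to the nerves being ``$2$-coskeletal.'' Neither formulation is optimal: the Duskin nerve of a bigroupoid is only $3$-coskeletal, not $2$-coskeletal, but it \emph{is} a $2$-dimensional Kan hypergroupoid in the sense of Definition 2.12, i.e.\ the horn maps $p_{\bar{k}}\maps X_{m}\to\bigwedge^{k}_{m}(X_{\bullet})$ are isomorphisms for all $m\geq 3$. Since $\Pe\tril\B$ is again a bigroupoid (its $2$-cells are invertible $2$-cells of $\B$, and its $1$-cells are equivalences because $\Pe$ is a groupoid and $\B$ a bigroupoid), both vertical maps in the relevant square are isomorphisms for every $m\geq 3$ and every $k$, so those squares are pullbacks for free. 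This observation replaces both your dimension-$3$ chase and the coskeletal bootstrap --- which, taken literally, would still leave dimension $4$ to check, since $3$-coskeletality only trivializes the horn maps from dimension $5$ onward, and the intermediate dimensions genuinely require Glenn's lemma rather than a bare ``pullbacks commute with limits.'' With that substitution your argument is complete and, in the dimension-$2$ cases, more rigorous than the one in the paper.
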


\begin{defi} An action of the n-dimensional Kan complex is an internal simplicial
map $\Lambda_{\bullet} \maps \Pe_{\bullet} \to \B_{\bullet}$ in $\E$
which is a weak exact fibration for all $m \geq n$.
\end{defi}

In the case of the bigroupoid $\B$, the Duskin nerve functor is a
2-dimensional hypergroupoid $\B_{\bullet}=\N_{2}(\B)$ and let
$\Pe_{\bullet}=\N_{2}(\A_{\B}\Pe)$ be the Duskin nerve of an action
bigroupoid associated to the action of the bigroupoid $\B$ on the
groupoid $\Pe$. Glenn introduced in \cite{Gl} a simplicial
definition of an n-dimensional hypergroupoid n-torsor in $\E$.

\begin{defi} An action $\Lambda_{\bullet} \maps P_{\bullet}
\to \B_{\bullet}$ is the n-dimensional hypergroupoid n-torsor over
$X$ in $\E$ if $P_{\bullet}$ is augmented over $X$, aspherical and
n-1-coskeletal ($P_{\bullet} \simeq Cosk^{n-1}(P_{\bullet}))$.
\end{defi}

In the case of the bigroupoid $\B$, the above definition reduces to
the following one.

\begin{defi} A bigroupoid $\B_{\bullet}$ 2-torsor over an object $X$ in $\E$ is
an internal simplicial map $\Lambda_{\bullet} \maps P_{\bullet} \to
\B_{\bullet}$ in $\Ss(\E)$, which is an exact fibration for all $n
\geq 2$, and where $P_{\bullet}$ is augmented over $X$, aspherical
and 1-coskeletal ($P_{\bullet} \simeq Cosk^{1}(P_{\bullet}))$.
\end{defi}

\noindent Thus in the case when an action of $\B$ on $\Pe$ is
principal, we have the main result of our paper.

\begin{thm} Let $\Pe$ be a $\B$-2-torsor over $X$.
Then simplicial map $\Lambda_{\bullet}=\N_{2}(\Lambda) \maps
\Pe_{\bullet} \to \B_{\bullet}$ is a Duskin-Glenn 2-torsor.
\begin{proof} The simplicial complex $\Pe_{\bullet}$ is augmented
over $X$ because the action of $\B$ is fiberwise, since for any
1-simplex $(f_{ij},\pi_{ij}) \maps p_{j} \to p_{i}$ in $P_{0}$,
where $\pi_{ij} \maps p_{j} \to p_{i} \tril f_{ij}$ we have
\[\pi_{0} d_{0}(f_{ij},\pi_{ij})=\pi_{0}(p_{i})=\pi_{0}(p_{i} \tril
f_{ij})=\pi_{1}(\pi_{ij})=\pi_{0}(p_{j})=\pi_{0}
d_{1}(f_{ij},\pi_{ij}).\] The simplicial complex $\Pe_{\bullet}$ is
obviously aspherical and we prove now that it is also 1-coskeletal.
A general 2-simplex in $Cosk^{1}(P_{\bullet})_{2}$ is a triple
$((f_{ij},\pi_{ij}),(f_{ik},\pi_{ik}),(f_{jk},\pi_{jk}))$ which we
see as the triangle
\[\xymatrix@!=4pc{p_{k} \ar[r]^-{(\pi_{jk},f_{jk})} \ar[dr]_-{(\pi_{ik},f_{ik})} &
p_{j} \ar[d]^-{(\pi_{ij},f_{ij})}\\& p_{i}}\] from which we have
morphisms $\pi_{ij} \circ \pi_{jk} \maps p_{k} \to p_{i} \tril
(f_{ij} \circ f_{jk})$ and $\pi_{ik} \maps p_{k} \to p_{i} \tril
f_{ik}$ in $\Pe$. Now we use the fact that the induced functor
\[\xymatrix{(Pr_{1},\A) \maps \Pe \times_{B_{0}} \B_{1} \ar[r] & \Pe \times_{X} \Pe}\]
is a (strong) equivalence of internal groupoids over $\Pe$, and
therefore fully faithful. Specially, for the two objects
$(p_{i},f_{ij} \circ f_{jk})$ and $(p_{i},f_{ik})$ of $\Pe
\times_{B_{0}} \B_{1}$, this equivalence induces a bijection
\[\xymatrix{Hom_{\Pe \times_{B_{0}} \B_{1}}((p_{i},f_{ij} \circ f_{jk}),(p_{i},f_{ik})) \simeq
Hom_{\Pe \times_{X} \Pe}((p_{i},p_{i} \tril (f_{ij} \circ
f_{jk})),(p_{i},p_{i} \tril f_{ik}))}\] and therefore for a morphism
$(id_{p_{i}},\pi_{ik} \circ (\pi_{ij} \circ \pi_{jk})^{-1}) \maps
(p_{i},p_{i} \tril (f_{ij} \circ f_{jk})) \to (p_{i},p_{i} \tril
f_{ik}))$
\[\xymatrix@!=4pc{p_{k} \ar[dr]_-{\pi_{ik}} & p_{i} \tril (f_{ij} \circ f_{jk})
\ar[l]_-{(\pi_{ij} \circ \pi_{jk})^{-1}}\\& p_{i} \tril f_{ik}}\]
there exists a unique 2-morphism $\beta_{ijk} \maps f_{ij} \circ
f_{jk} \to f_{ik}$ in $\B$, such that the diagram
\[\xymatrix@!=4pc{p_{k} \ar[r]^-{\pi_{ij} \circ \pi_{jk}} \ar[dr]_-{\pi_{ik}} & p_{i} \tril (f_{ij} \circ f_{jk})
\ar[d]^-{p \tril \beta_{ijk}}\\ & p_{i} \tril f_{ik}}\] commutes,
and this uniquely determines a 2-simplex
\[\xymatrix@!=4pc{p_{k} \drtwocell<\omit>{<-4>\,\,\,\,\,\,\,\beta_{ijk}}
\ar[r]^-{(\pi_{jk},f_{jk})} \ar[dr]_-{(\pi_{ik},f_{ik})} & p_{j} \ar[d]^-{(\pi_{ij},f_{ij})}\\
& p_{i}}\] in $\Pe_{2}$, which proves that we have a bijection
$\Pe_{2} \simeq Cosk^{1}(P_{\bullet})_{2}$. From here it follows
immediately that $\Pe_{\bullet} \simeq Cosk^{1}(P_{\bullet})$.
\end{proof}
\end{thm}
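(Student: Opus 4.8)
The plan is to verify the three conditions appearing in Definition~7.5 for $\Lambda_{\bullet}=\N_{2}(\Lambda)$, namely that it is an exact fibration in all dimensions $n\geq 2$, that $\Pe_{\bullet}$ is augmented over $X$ and aspherical, and that $\Pe_{\bullet}$ is $1$-coskeletal. The exactness is already in hand: a bigroupoid $\B$ acting on a groupoid $\Pe$ is in particular a bicategory acting on a category, so Theorem~7.1 gives that $\Lambda_{\bullet}$ is an exact fibration for all $n\geq 2$, and its proof also records --- and I will reuse --- the fact that both $\B_{\bullet}$ and $\Pe_{\bullet}$ are $2$-coskeletal. For the augmentation I would use that, as part of the $\B$-$2$-torsor data, $\Pe$ is a $\B$-$2$-bundle over $X$, so the action is fiberwise: $\pi_{0}(p\tril f)=\pi_{0}(p)$ on objects and $\pi_{0}$ agrees on source and target of any arrow of $\Pe$. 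Thus for a $1$-simplex $(f_{ij},\pi_{ij})\maps p_{j}\to p_{i}$, i.e.\ an arrow $\pi_{ij}\maps p_{j}\to p_{i}\tril f_{ij}$ of $\Pe$, one gets $\pi_{0}(p_{j})=\pi_{0}(p_{i}\tril f_{ij})=\pi_{0}(p_{i})$, so that $\pi_{0}d^{1}_{0}=\pi_{0}d^{1}_{1}$ and $\Pe_{\bullet}$ carries a canonical augmentation over $X$, with the compatibilities in higher dimensions forced by the simplicial identities.

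The substantial point is $1$-coskeletality. Since $\B$ is an internal bigroupoid and $\Pe$ an internal groupoid, the action bicategory $\Pe\tril\B$ is again a bigroupoid (a $1$-morphism $(\psi,h)$ is invertible once $h$ is invertible in $\B$ and $\psi$ in $\Pe$, and every $2$-morphism lives in $B_{2}$, where all $2$-cells are invertible), which is why $\Pe_{\bullet}$ is $2$-coskeletal. It therefore suffices to prove that the boundary morphism $\delta_{2}\maps P_{2}\to K_{2}(\Pe_{\bullet})$ is an isomorphism: this makes the $2$-truncations of $\Pe_{\bullet}$ and $Cosk^{1}(\Pe_{\bullet})$ coincide, and as both are $2$-coskeletal (the latter being even $1$-coskeletal) it follows that $\Pe_{\bullet}\simeq Cosk^{1}(\Pe_{\bullet})$. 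To show $\delta_{2}$ is an isomorphism I would start from an element $\big((f_{ij},\pi_{ij}),(f_{ik},\pi_{ik}),(f_{jk},\pi_{jk})\big)$ of $K_{2}(\Pe_{\bullet})$, i.e.\ a compatible triangle on vertices $p_{i},p_{j},p_{k}$, and read off the two arrows $\pi_{ij}\circ\pi_{jk}\maps p_{k}\to p_{i}\tril(f_{ij}\circ f_{jk})$ (the composite running through the coherence isomorphism $\kappa_{i,j,k}$) and $\pi_{ik}\maps p_{k}\to p_{i}\tril f_{ik}$ of $\Pe$. The decisive input is then the defining property of a $\B$-$2$-torsor: the induced internal functor $(Pr_{1},A)\maps\Pe\times_{B_{0}}\B_{1}\to\Pe\times_{X}\Pe$ is a strong equivalence, hence fully faithful, so applied to the objects $(p_{i},f_{ij}\circ f_{jk})$ and $(p_{i},f_{ik})$ it yields a bijection of hom-objects under which the morphism $(id_{p_{i}},\,\pi_{ik}\circ(\pi_{ij}\circ\pi_{jk})^{-1})$ of $\Pe\times_{X}\Pe$ corresponds to a unique $(id_{p_{i}},\beta_{ijk})$ with $\beta_{ijk}\maps f_{ij}\circ f_{jk}\to f_{ik}$ a $2$-morphism of $\B$ satisfying $(p_{i}\tril\beta_{ijk})\circ(\pi_{ij}\circ\pi_{jk})=\pi_{ik}$. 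This relation is exactly what makes $\beta_{ijk}$ the (unique) $2$-simplex of $\Pe_{\bullet}$ with the prescribed boundary, so $\delta_{2}$ is a bijection --- and an isomorphism of internal objects by a diagram chase that, as in the proof of Proposition~2.1, may be carried out in $\Set$ via the Barr embedding theorem.

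What remains is asphericity, which I would dispatch quickly: in dimensions $\geq 3$ it is automatic once $\Pe_{\bullet}$ is $1$-coskeletal with $\delta_{2}$ an isomorphism, in dimension $2$ it is the isomorphism $\delta_{2}$ just established, and $\delta_{1}\maps P_{1}\to K_{1}(\Pe_{\bullet})=P_{0}\times_{X}P_{0}$ is epic because the same strong equivalence $(Pr_{1},A)$ is essentially surjective on the fibre over $p_{i}$: a pair $(p_{i},p_{j})$ over a common point of $X$ comes from some $1$-morphism $h$ of $\B$ together with an isomorphism $p_{i}\tril h\cong p_{j}$ of $\Pe$, that is, from a $1$-simplex $p_{j}\to p_{i}$ of $\Pe_{\bullet}$. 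I expect the genuine obstacle to be the $1$-coskeletal step --- reducing it to the single statement $P_{2}\simeq K_{2}(\Pe_{\bullet})$ and then extracting $\beta_{ijk}$ from full faithfulness of $(Pr_{1},A)$, where some care is needed to read ``fully faithful'' for an internal strong equivalence as an isomorphism of the appropriate internal hom-objects and to push the coherence isomorphism $\kappa$ correctly through the composite $\pi_{ij}\circ\pi_{jk}$; propagating the dimension-$2$ conclusion upwards by $2$-coskeletality is then only bookkeeping.
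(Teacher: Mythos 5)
Your proposal is correct and follows essentially the same route as the paper: the augmentation comes from the fiberwise condition, and the key step is extracting the unique $\beta_{ijk}$ from the morphism $(id_{p_{i}},\pi_{ik}\circ(\pi_{ij}\circ\pi_{jk})^{-1})$ via full faithfulness of $(Pr_{1},A)$, yielding $P_{2}\simeq K_{2}(\Pe_{\bullet})$ and hence $1$-coskeletality. You supply slightly more scaffolding than the paper --- the explicit propagation via $2$-coskeletality and the use of essential surjectivity of $(Pr_{1},A)$ for asphericity in dimension $1$, which the paper dismisses as ``obvious'' --- but the argument is the same.
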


\end{document}